\newcommand{\R}{\mathbb{R}}
\newcommand{\Pw}{\mathcal{P}}
\newcommand{\Inv}{\text{Inv}}
\newtheorem{theorem}{Theorem}[section]
\newtheorem{corollary}{Corollary}[theorem]
\newtheorem{lemma}{Lemma}[section]
\newtheorem{definition}{Definition}[section]
\title{Isolating Neighborhoods and Filippov Systems: Extending Conley Theory to Differential Inclusions}
\author{Cameron Thieme}
\date{\today}
\begin{document}
\sloppy
\maketitle

\begin{abstract}
The Conley index theory is a powerful topological tool for obtaining information about invariant sets in continuous dynamical systems.  A key feature of Conley theory is that the index is robust under perturbation; given a continuous family of flows $\{\varphi_\lambda\}$, the index remains constant over a range of parameter values, avoiding many of the complications associated with bifurcations.  

This theory is well-developed for flows and homomorphisms, and has even been extended to certain classes of semiflows.  However, in recent years mathematicians and scientists have become interested in differential inclusions, often called Filippov systems, and Conley theory has not yet been fully developed in this setting.  This paper aims to begin extending the index theory to this larger class of dynamical systems.  In particular, we introduce a notion of perturbation that is suitable for this setting and show that isolating neighborhoods, a fundamental object in Conley index theory, are stable under this sense of perturbation.  We also discuss how perturbation in this sense allows us to provide rigorous results about "nearby smooth systems" by analyzing the discontinuous Filippov system.
\end{abstract}
\let\thefootnote\relax\footnote{2010 \textit{Mathematics Subject Classification}. 34A60, 37B30, 37B25, 37B45}
\let\thefootnote\relax\footnote{{\it Key words and phrases}. Conley Index, Differential Inclusions, Filippov Systems}
\section{Introduction}

In dynamical systems we often consider differential equations of the form $\dot{x}=f(x)$, where $f$ is assumed to be smooth in some sense.  However, in many modern models, the underlying differential equations are not necessarily even continuous.  This set includes models of friction, where an object can reach a rest point in finite time, and models involving mechanical switching, where a solution evolves according to one vector field till it reaches a certain point and then switches abruptly to another \cite{bernardo}. Low dimensional climate models also frequently exhibit non-smooth behavior \cite{welander}.

In order to analyze these non-smooth systems, the discontinuous differential equation is frequently reframed as a \textit{differential inclusion} $$\dot{x}\in F(x)$$ where $F$ is a set-valued map.  Under basic assumptions on the set-valued map $F$, such a differential inclusion can be shown to have an absolutely continuous solution $x(\cdot)$ on some interval, satisfying $$\dot{x}(t)\in F(x(t))\hspace{2mm} a.e.,\hspace{1cm} x(0)=x_0$$ for any initial condition $x_0$ in the domain of $F$.  These solutions also have many other nice properties which we expect from dynamical systems.  These properties are documented in a seminal work by Filippov (\cite{filippov}), which is why such systems are frequently called \textit{Filippov systems}.  Since many readers may be unfamiliar with differential inclusions we will review some basic information on these systems in the next section.

Differential inclusions can be extremely complicated.  Because any standard differential equation can be rewritten as a differential inclusion, these systems feature all of the difficulties associated with classical dynamical systems, plus many of their own unique problems.  Chief among these novel complications is the fact that for a fixed initial condition $x_0$ there may be infinitely many different solutions to the differential inclusion.  Having infinitely many solutions prevents these dynamical systems from being described as a flow, a difficulty which will be addressed by the introduction of \textit{multiflows}, an object defined by Richard McGehee in order to generalize flows to this setting \cite{mcspeech}.   

%need to rewrite this paragraph, maybe break into more paragraphs.
Moreover, researchers frequently want to understand the relationship between the Filippov system and nearby smooth systems.  Here a nearby smooth system is some parametrized differential equation $$\dot{x}=f_\lambda(x)$$ where the smooth function $f_\lambda$ limits to a piecewise-continuous vector field $f$ as $\lambda$ approaches $0$.  The discontinuous vector field $f$ is then modified, generally using the Filippov convex combination method, to change it into a set-valued map $F$ that agrees with $f$ wherever $f$ is continuous.  This alteration allows us to study the differential inclusion $\dot{x}\in F(x)$.

The discontinuous model is often studied because it is simpler to understand, but it is not always clear whether behavior found in the Filippov system persists in nearby smooth systems \cite{jeffrey}.  Such issues motivate the study of the perturbation of differential inclusions.  We hope to extend Conley index theory to the setting of differential inclusions because Conley theory provides rigorous results about dynamical systems which are totally stable under perturbation.  That is, any information that we can obtain about the system $\dot{x}=f(x)$ using Conley theory will also hold under small perturbations of $f$, avoiding many of the complications of bifurcation theory.

The most basic setting for Conley index theory is in the context of flows $$\varphi:\R\times X\mapsto X$$ where $X$ is a locally compact metric space such as $X=\R^n$.
%The most basic setting for Conley index theory is in the context of flows on a locally compact metric space $X$, such as $X=\R^n$.  A \textit{flow} on $X$ is a continuous map $\varphi : \R \times X \mapsto X$ satisfying the following group properties:
%\begin{enumerate}
%\item $\varphi(0, x_0 ) = x_0$
%\item $\varphi(s, \varphi(t, x_0 )) = \varphi(s + t, x_0 )$
%\end{enumerate}
The main goal of the Conley index is to provide a robust qualitative description of compact invariant sets of such a flow.  A set $S\subset\R^n$ is said to be \textit{invariant} if $$\varphi(\R,S):=\cup_{t\in\R}\varphi(t,S)=S$$

In general, the dynamics of such an invariant set can be extremely complicated, and their structure can radically change under even small perturbations.  To obtain a detailed view of these dynamics requires a great deal of analysis.  However, the Conley index aims to give a rough picture of compact invariant sets in a much more computationally simple manner.  

The first object of interest in Conley theory is the \textit{isolating neighborhood}, which is a compact set whose maximal invariant set is contained in its interior.  That is, a compact set $N\subset\R^n$ is an isolating neighborhood for the flow $\varphi$ if $$\text{Inv}(N,\varphi):=\{x\in N|\varphi(t,x)\in N\, \forall t\in\R\}\subset\text{int}(N)$$  These neighborhoods allow us to study \textit{isolated invariant sets}, which are sets $S\subset\R^n$ satisfying $S=\text{Inv}(N,\varphi)$ for some isolating neighborhood $N$.  

The most important property of isolating neighborhoods is that they are stable under perturbation.  That is, given a continuous family of flows $\{\varphi_\lambda\}$, if $N$ is an isolating neighborhood for $\varphi_0$, then for $|\lambda|$ sufficiently small $N$ is also an isolating neighborhood for $\varphi_\lambda$.  It is this basic concept in Conley theory that we extend in this paper in theorem \ref{pert_thm}.  Similar results are proven in \cite{mrozek}, \cite{dzedzej} and \cite{casagrande}, but the context and perspective used in those paper are somewhat different from the one used here.  In addition, the result proven here applies more directly to a larger class of differential inclusions, removing any bounding terms on the set-valued vector field or compactness assumptions on its domain.  

This generalized result will come near the end of the paper because a fair bit of background is necessary to state and prove it in the setting of differential inclusions.  In fact, it is contained in the fifth of six sections.  The first section is this introduction, and the next is a review of some basic results from differential inclusions.  The third section extends the notion of perturbing a differential equation to the setting of differential inclusions; since this notion of perturbation is novel, we spend a good amount of space justifying our choice of definition.  Following that we introduce the multiflow, which allows us to discuss the solution sets of differential inclusions.  In the fifth section we discuss isolating neighborhoods and isolated invariant sets, finally arriving at the generalized result that isolating neighborhoods are stable under perturbation.  The final section is very brief, offering some conclusions and acknowledgements.   

Conley index theory tells us much more about dynamical systems than the simple result on isolating neighborhoods that we will generalize in this paper.  Using this theory, an invariant set can be decomposed into an attracting region, a repelling region, and a gradient-like region.  It also associates a homological index to the isolated invariant sets which can be computed using only the behaviour on the boundary of an isolating neighborhood for a single parameter value.  This index conveys robust information about the topology of the invariant sets.  For basic information on Conley theory, the reader is referred to the brief survey article \cite{mischaikow}.  For those interested in more detailed information on this topic, see Conley's manuscript \cite{conley} or the more complete survey \cite{mischmroz}; this paper attempts to emulate the phrasing of these ideas found in \cite{mischmroz} particularly.  These other features of Conley theory--the decomposition and the index--are not extended in this paper, but it is our ambition to do so in the future.  With this paper we hope to develop some framework and begin the process of extending Conley index theory to differential inclusions.

\section{The Basics of Differential Inclusions}\label{diffinc}
Before diving into the extension of Conley theory we will review some basic information about differential inclusions.  To do so we will first introduce some notation.

For simplicity, we will restrict our view to subsets of Euclidean space, and assume that $X$ is a subset of $\R^n$.  

Throughout this paper we will denote a closed $\varepsilon$ neighborhood of a set $S\subset X$ by $B_\varepsilon(S)$; that is, $$B_\varepsilon(S):=\{x\in X: |x-S|\leq\varepsilon\}$$ where $|x-S|:=\inf_{s\in S}|x-s|$.  We use this notation, which is generally reserved for $\varepsilon$-balls around single points, so that we can reserve the variable $N$ for isolating neighborhoods.  Note in particular that this neighborhood is closed.  We deal almost exclusively with closed neighborhoods in this setting and so it is cleaner to use notation which automatically conveys this notion.  

We will use the notation $\Pw(S)$ to denote the \textit{powerset} of $S$, or the set of all subsets of the set $S$.  Additionally, we will often be discussing set-valued maps in this paper, and it is useful to distinguish them from their single-valued counterparts.  In order to make that distinction obvious, all single-valued maps will be lower-case, like $f$, and set-valued maps will be capitalized, like $F$.  

Notice that a single-valued map $f$ can be considered as a set-valued map whose image is always a one-point set.  Because of this fact all results about set-valued maps are generalizations of results in the classic, single-valued case.  Viewing single-valued maps as one-point set-valued maps also allows us to make rigorous statements about perturbing a differential inclusion to a nearby smooth system.  

\subsection{Set-Valued Maps}

We now begin by defining \textit{upper-semicontinuity} for set-valued maps.  

\begin{definition}
Let $X$ and $Z$ be subsets of $\R^n$.  A set-valued function $F:X\to\Pw(Z)$ is said to be \textbf{upper-semicontinuous at the point x} if for any $\varepsilon>0$ there exists some $\delta>0$ such that $|x-y|<\delta$ implies that $F(y)\subset B_\varepsilon(F(x))$.
 
$F$ is said to be \textbf{upper-semicontinuous} if it is upper-semicontinuous at each $x\in X$.
\end{definition}

If we examine this definition, we notice that if an upper-semicontinuous set-valued map $F$ is in fact single-valued then it is continuous in the traditional sense.  Because of this fact, the definition of upper-semicontinuity provided here for set-valued maps contradicts the better known definition of upper-semicontinuity for real-valued functions, which are generally not continuous.  This contradictory terminology is somewhat unfortunate but standard, and so we will use it here.  

It is important to notice that the definition of upper-semicontinuity is inherently one-sided in the sense that $|x-y|<\delta$ does not imply that $F(x)\subset B_\varepsilon(F(y))$.  If $F$ did have such a symmetric property it would be called \textit{continuous}, but it turns out that that requirement is too stringent for many of the most important differential inclusions and so we will not assume it.  

The upper-semicontinuity of a set-valued function preserves many of the properties of continuity of single-valued functions.  Importantly, upper-semicontinuity on a compact domain implies boundedness.  A proof of this result may be found in Filippov's book, \cite{filippov}, or in the paper \cite{thieme}, which presents concise versions of the results from \cite{filippov} that are relevant to this paper.  

\begin{lemma}\cite{filippov}
Let $X$ and $Z$ be subsets of $\R^n$ and assume $X$ is compact.  If the set-valued function $F:X\to\Pw(Z)$ is upper-semicontinuous then it is bounded. 

In more specific terms, define $$|F(x)|:=\sup\{|v|:v\in F(x)\}$$ Then if $X$ is compact, $\sup_{x\in X}|F(x)|$ is finite.
\end{lemma}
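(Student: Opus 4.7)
The plan is a straightforward compactness argument, modelled on the classical proof that a continuous real-valued function on a compact domain is bounded. The key observation is that upper-semicontinuity produces \emph{local} boundedness of $|F(\cdot)|$, and compactness of $X$ then upgrades this local control into a global bound.

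First, I would fix an arbitrary $x \in X$ and apply the definition of upper-semicontinuity with $\varepsilon = 1$ to obtain a $\delta_x > 0$ such that $|y - x| < \delta_x$ implies $F(y) \subset B_1(F(x))$. Since each individual image $F(x)$ is assumed to be bounded (as is standard in the Filippov setting), the enlargement $B_1(F(x))$ is bounded as well, so every $v \in B_1(F(x))$ satisfies $|v| \leq 1 + |F(x)| =: M_x$. In particular $|F(y)| \leq M_x$ for all $y$ within distance $\delta_x$ of $x$, which is the promised local bound.

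Next I would pass to the open cover $\{U_x : x \in X\}$ of $X$, where $U_x := \{y \in X : |y - x| < \delta_x\}$. Compactness of $X$ yields a finite subcover indexed by points $x_1, \ldots, x_k$. Setting $M := \max_{1 \leq i \leq k} M_{x_i}$, any $y \in X$ lies in some $U_{x_i}$ and therefore satisfies $|F(y)| \leq M_{x_i} \leq M$, establishing $\sup_{x \in X} |F(x)| \leq M < \infty$, as required.

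The argument is essentially routine once the setup is in place; the only subtlety worth flagging is the implicit requirement that $F(x)$ be bounded at each individual point, which is what allows the local constant $M_x$ in the first step to be finite. Without such pointwise boundedness the conclusion would fail trivially, but this condition is built into the Filippov framework (compactness of the values of $F$), so it is not a genuine obstacle. Beyond this, I anticipate no real difficulty: the proof is a direct mimicry of the standard continuity-and-compactness template.
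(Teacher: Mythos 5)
Your argument is correct and is essentially the standard proof: the paper itself does not reproduce a proof but defers to Filippov's book and the companion paper, where the same local-boundedness-plus-finite-subcover argument is used. You are also right to flag that pointwise boundedness of each $F(x)$ is needed (upper-semicontinuity alone would not rule out $F(x)=\R^n$); in this paper that is supplied by the compactness of values in the Filippov conditions, so no gap remains.
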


On the other hand, there are many standard properties of continuous single-valued functions which do not extend to set-valued maps which are merely upper-semicontinuous.  For instance, even if the domain of an upper-semicontinuous function is compact, the upper-semicontinuity is not uniform; that is, the choice of $\delta$ in the definition generally depends on both $x$ and $\varepsilon$, even in a compact domain.  This property presents some difficulties in extending the notion of an isolating neighborhood, but they can be overcome.  

We can now list all of the properties of a set-valued map $F$ that we demand for a differential inclusion $\dot{x}\in F(x)$.  These requirements are described as the basic conditions by Filippov (\cite{filippov}), but we will give them a new name that reflects his role in this theory.

\begin{definition}
Let $X,Z$ be subsets of $\R^n$.  The set-valued map $F:X\to\Pw(Z)$ is said to satisfy the \textbf{Filippov Conditions} if it is upper-semicontinuous and if the set $F(x_0)$ is
\begin{itemize}
\item Compact
\item Convex
\item Non-empty
\end{itemize}
for each $x_0\in X$.  
\end{definition}

We note in particular that these conditions do not place any sort of global bounding term on the set-valued map $F$, and they allow the case that $X$ and $Z$ are not compact, distinguishing our work from previous expositions.  

\subsection{Discontinuous Vector fields and Differential Inclusions}

To begin this section let us record explicitly what is meant by a solution of the differential inclusion $\dot{x}\in F(x)$.

% Our main object of study is the differential inclusion $\dot{x}\in F(x)$.  The following definition records explicitly what is meant by a solution of this inclusion.  

\begin{definition}
Let $G\subset\R^n$ be open and let $F:G\to\Pw(\R^n)$ be a set-valued map.  A solution of the differential inclusion $$\dot{x}\in F(x)$$ is an absolutely continuous function $$x:I\to\R^n$$ on some interval $I\subset \R$ whose derivative satisfies $$\dot{x}(t)\in F(x(t))$$ for almost all $t$ on the interval $I$.
\end{definition}

One commonly studied kind of differential inclusion comes from differential equations $\dot{x}=f(x)$ with piecewise continuous righthand side.  Beginning with this single-valued, discontinuous $f$, the Filippov convex combination method may be used to make $f$ set-valued at the points of discontinuity, and the resulting set-valued map meets the Filippov conditions.  This method is discussed extensively in \cite{filippov} and \cite{thieme}, but let us see a simple motivational example here.  

%The majority of the interest in differential inclusions comes from systems which switch abrubtly between two regimes of behavior like the example in section \ref{diffinc}.  We can easily generalize that example to an arbitrary number of dimensions and more complicated switching boundaries.  

Let $G\subset\R^n$ be open.  We will begin with a piecewise continuous map $f:G\to\R^n$.  To define such a map in some generality assume $h:G\to\R$ is a continuous scalar function.  The function $h$ is used to partition the domain $G$ into two disjoint open regions, $$G_1=\{x\in G|h(x)>0\}\hspace{1cm} G_2=\{x\in G|h(x)<0\}$$ with a discontinuity boundary $\Sigma=\{x\in G|h(x)=0\}$.  For simplicity, assume that $f_1$ and $f_2$ are continuous on all of $G$ and let $$f(x)=
\begin{cases}
f_1 (x), & h(x)>0 \\
f_2 (x), & h(x)\leq 0
\end{cases}$$

We can use the Filippov convex combination method here to define a set-valued map $F$ which agrees with $f$ away from the discontinuity boundary $\Sigma$ and which satisfies the Filippov conditions.  In this simplified case where the domain $G$ has been split into only two regions by the function $h$, the Filippov convex combination method simply defines $F(x)$ to be the convex hull of the vectors $f_1(x)$ and $f_2(x)$ for all $x\in G$ such that $h(x)=0$.  Written completely, the resulting set-valued map is
$$F(x) =
\begin{cases}
f_1 (x), & h(x) >0 \\
f_2 (x), & h(x) < 0 \\
\{\alpha f_1(x) + (1-\alpha)f_2(x): \alpha\in[0,1]\} & h(x)=0
\end{cases}
$$

By the following theorem, which can be found in Filippov's book (\cite{filippov}) and the paper \cite{thieme}, $F$ satisfies the Filippov conditions.  In particular, notice that the only assumption placed on the maps $f_i$ is continuity, allowing us to study any piecewise continuous vector field, even if its growth is unbounded.

\begin{theorem}\label{convcomb}
If $F$ is the set-valued map created by using the Filippov convex combination method to modify a piecewise continuous single-valued function $f$, then $F$ satisfies the Filippov conditions.  
\end{theorem}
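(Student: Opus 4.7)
The plan is to verify each of the four Filippov conditions in turn (upper-semicontinuity, compactness, convexity, and non-emptiness), working from the explicit piecewise formula for $F$ displayed just above the theorem. Non-emptiness is immediate in all three branches of the definition. Compactness and convexity are also immediate: singletons are compact and convex, and on $\Sigma=\{h=0\}$ the set $F(x)$ is a line segment (the convex hull of two points in $\R^n$), which is again compact and convex. So the substantive content is upper-semicontinuity, and I would organize the argument around the position of the base point $x_0$ relative to the discontinuity set $\Sigma$.

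For $x_0\notin\Sigma$, I would use the continuity of $h$ to choose a neighborhood $U$ of $x_0$ on which $h$ has constant sign; on $U$ the map $F$ is single-valued and coincides with the continuous function $f_1$ or $f_2$, so upper-semicontinuity at $x_0$ reduces to ordinary continuity. For $x_0\in\Sigma$, fix $\varepsilon>0$ and use the continuity of $f_1$ and $f_2$ at $x_0$ to choose $\delta>0$ so that $|y-x_0|<\delta$ forces $|f_i(y)-f_i(x_0)|<\varepsilon$ for $i=1,2$. I would then split on the sign of $h(y)$: if $h(y)>0$ then $F(y)=\{f_1(y)\}$ and $f_1(y)$ is within $\varepsilon$ of $f_1(x_0)\in F(x_0)$; the case $h(y)<0$ is symmetric; and if $h(y)=0$, any point of $F(y)$ has the form $\alpha f_1(y)+(1-\alpha)f_2(y)$ for some $\alpha\in[0,1]$, which is within $\varepsilon$ of $\alpha f_1(x_0)+(1-\alpha)f_2(x_0)\in F(x_0)$ by the triangle inequality. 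In every case $F(y)\subset B_\varepsilon(F(x_0))$.

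The main obstacle, such as it is, lies in the last subcase: one must avoid the tempting but wrong strategy of trying to match each point of $F(y)$ to a specific \emph{a priori} chosen point of $F(x_0)$ independent of $\alpha$. The correct move is to use the same convex-combination parameter $\alpha$ on both sides and bound the difference coordinate-wise in $f_1$ and $f_2$; this is what makes the convex-hull definition so compatible with upper-semicontinuity. No separate argument for uniformity of $\delta$ in $\alpha$ is needed because the bound $|\alpha f_1(y)+(1-\alpha)f_2(y) - \alpha f_1(x_0)-(1-\alpha)f_2(x_0)|\le \alpha\varepsilon+(1-\alpha)\varepsilon=\varepsilon$ is automatic. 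Once this observation is in place, stitching together the three sub-cases completes the verification of upper-semicontinuity, and the theorem follows.
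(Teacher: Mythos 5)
Your proof is correct and matches the paper's approach: the paper itself defers the proof of this theorem to \cite{filippov} and \cite{thieme}, but its in-text proof of the parametrized analogue (the lemma of Section 3.1) is structurally identical to your argument — compactness, convexity, and non-emptiness are dismissed as immediate, and upper-semicontinuity is verified by splitting on the sign of $h$ and pairing each $\alpha f_1(y)+(1-\alpha)f_2(y)$ with the point of $F(x_0)$ carrying the same $\alpha$. Note only that, like the paper's displayed setup, your argument covers the two-region case; the general convex combination method over several subdomains runs the same triangle-inequality computation with longer convex combinations.
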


\begin{figure}[h]
\begin{center}
\includegraphics[scale=0.4]{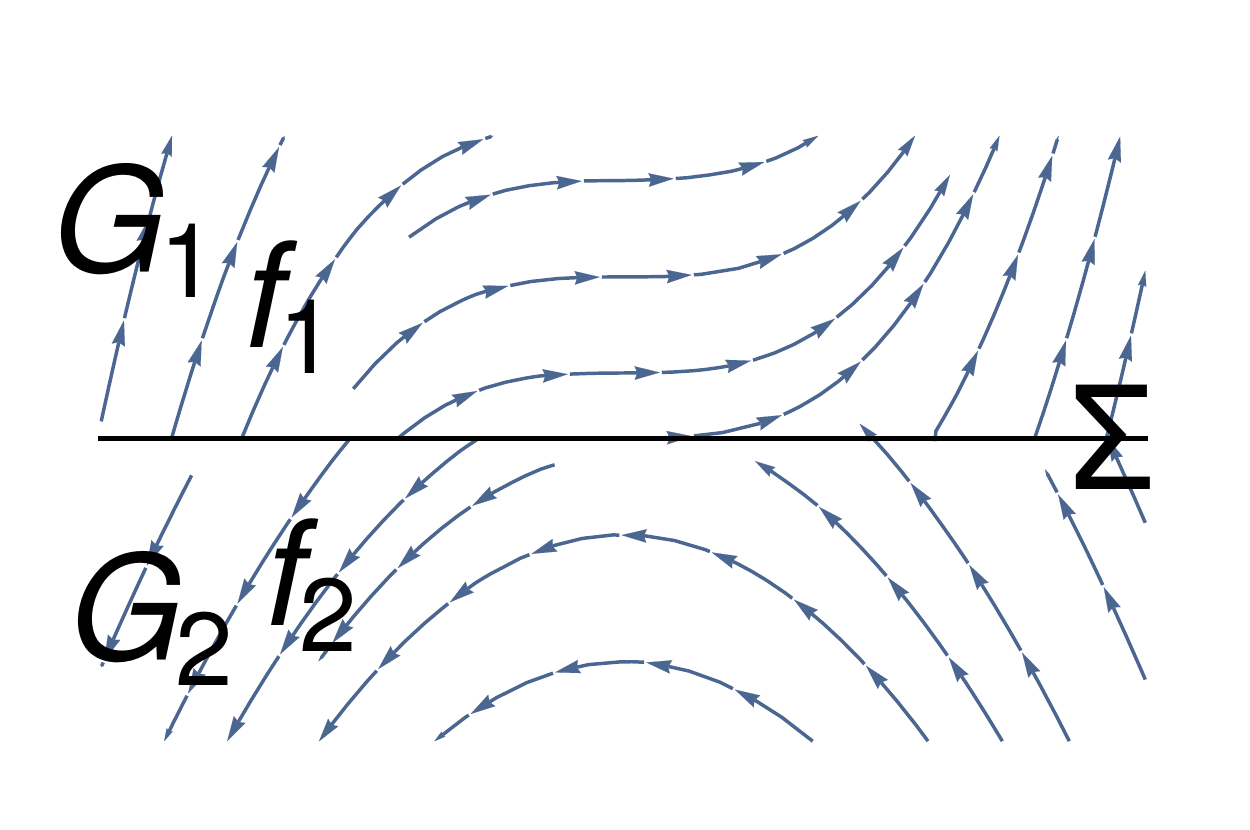}
\caption{A planar Filippov system with $\R^2$ split into two regions.}
\end{center}
\end{figure}

This method can be generalized to split any open domain $G\subset \R^n$ into some number of different subdomains, and the splitting boundary can be considerably more complicated.  We also note that many other systems---including ones which do not arise from piecewise continuous vector fields, such as control systems---also satisfy the Filippov conditions.  Systems like the example described above are our primary motivation for this paper, but the results extend to these other applications.  

\section{Perturbation of Differential Inclusions}

The ultimate goal of this paper is to generalize the result that isolating neighborhoods are stable under perturbation to the case of differential inclusions.  To do so, we need to define what we mean by perturbation in this setting.  For this definition to be interesting or meaningful, it should satisfy three qualitative conditions:

\begin{enumerate}
\item The definition should generalize standard notions of perturbation.  That is, in the special case where $F$ is single-valued, the new notion of perturbation must be equivalent to continuously perturbing a single-valued function.
\item The definition should allow us to say when set-valued maps are close to each other.\label{crit2}
\item The definition should allow us to perturb a differential inclusion to a nearby smooth system.
\end{enumerate}

The notion that we will adopt, which we will argue show satisfies all of these criteria, is an upper-semicontinuous family of differential inclusions, $\dot{x}\in F(x,\lambda)$, for $\lambda$ in some interval.  That is, for a given differential inclusion $\dot{x}\in F(x)$, we trivially write $F(x)=F(x,0)$ and consider a set-valued function $F(x,\lambda)$ that meets the Filippov conditions.  Then perturbing our original differential inclusion means that we perturb the parameter $\lambda$.  This notion of perturbation is very similar to the definition found in \cite{mrozek}, and the following definition records this idea.  
% That is, if $x\in X\subset\R^n$ and $\lambda\in[-1,1]$, then we will consider set-valued maps 
%$$F: X\times [-1,1]\to \R^n$$
%such that $F$ is upper-semicontinuous in both $x$ and $\lambda$.  As with Filippov, we also assume that $F$ is compact-valued, convex-valued, and non-empty at each $(x,\lambda)$ (so in other words $F$ satisfies the Filippov conditions).

\begin{definition}
Let $G\subset\R^n$ and assume that $F:G\times[-1,1]\to\R^n$ meets the Filippov conditions; in particular, $F$ is upper-semicontinuous in both $x$ and $\lambda$ together.  Then the differential inclusion $$\dot{x}\in F(x,\lambda)$$ is considered to be a \textbf{$\lambda$-perturbation} of the differential inclusion $$\dot{x}\in F(x,0)$$
\end{definition}

Note that if $F$ were a single-valued function $f$, this sense of perturbation is equivalent to requiring that $f$ be continuous in both $x$ and $\lambda$.  Thus this novel definition of perturbation is, in fact, a generalization of the perturbation of a differential equation $\dot{x}=f(x)$, and so it meets the first, and most important, of our criteria for a new definition.  The remaining two criteria are somewhat more subjective than the first, and we will argue that this definition satisfies them in the following subsections.

\subsection{Perturbing a Set-Valued Function}

We return now to the example from section \ref{diffinc}:

$$\dot{x}\in F(x) =
\begin{cases}
f_1 (x), & h(x) >0 \\
f_2 (x), & h(x) < 0 \\
\{\alpha f_1(x) + (1-\alpha)f_2(x): \alpha\in[0,1]\} & h(x)=0
\end{cases}
$$

We will show that continuously perturbing either $h$ or the $f_i$ yields an allowable perturbation in our sense.  

\begin{lemma}
Let $G\subset\R^n$ be open and assume $f_i:G\times[-1,1]\to\R^n$ and $h:G\times[-1,1]\to\R^n$ are continuous.  Then the set-valued function $$F(x,\lambda) =
\begin{cases}
f_1 (x,\lambda), & h(x,\lambda) >0 \\
f_2 (x,\lambda), & h(x,\lambda) < 0 \\
\{\alpha f_1(x,\lambda) + (1-\alpha)f_2(x,\lambda): \alpha\in[0,1]\} & h(x,\lambda)=0
\end{cases}
$$
satisfies the Filippov conditions.
\end{lemma}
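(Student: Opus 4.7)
The plan is to observe that the lemma is essentially a restatement of Theorem \ref{convcomb} after treating the parameter $\lambda$ as an extra spatial coordinate. Setting $\tilde{G} := G \times [-1,1]$ and regarding $\tilde{h}(x,\lambda) := h(x,\lambda)$ as a continuous scalar function on $\tilde{G}$, the domain $\tilde{G}$ partitions into the two open regions $\{\tilde{h} > 0\}$ and $\{\tilde{h} < 0\}$ with splitting surface $\{\tilde{h} = 0\}$, on which the single-valued piecewise map built from $\tilde{f}_1(x,\lambda) := f_1(x,\lambda)$ and $\tilde{f}_2(x,\lambda) := f_2(x,\lambda)$ is continuous on each side. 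Hence $F(x,\lambda)$ is literally the set-valued map produced by applying the Filippov convex combination method to this piecewise continuous map, and Theorem \ref{convcomb} delivers the conclusion.

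If one prefers not to invoke Theorem \ref{convcomb} indirectly (for instance because its domain hypothesis is phrased for open subsets of $\R^n$), the same three properties can be verified by hand. The pointwise conditions are essentially free: for any $(x,\lambda)$, the value $F(x,\lambda)$ is either a singleton $\{f_i(x,\lambda)\}$ or the convex hull of two points in $\R^n$, so in every case it is compact, convex, and non-empty. Continuity of $f_1,f_2,h$ in $(x,\lambda)$ already gives joint upper-semicontinuity at the two types of points where $\tilde{h}(x_0,\lambda_0) \neq 0$: continuity of $\tilde h$ keeps one on the same side of the discontinuity surface in a neighborhood, and continuity of $\tilde f_i$ keeps $F$ near the singleton value $\{f_i(x_0,\lambda_0)\}$.

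The one step that requires care, and what I would flag as the main obstacle, is verifying upper-semicontinuity at points $(x_0,\lambda_0)$ with $\tilde{h}(x_0,\lambda_0) = 0$, where $F(x_0,\lambda_0)$ is the entire segment between $f_1(x_0,\lambda_0)$ and $f_2(x_0,\lambda_0)$ but $F(x,\lambda)$ at nearby points may be only a single endpoint. I would handle this by fixing $\varepsilon > 0$ and choosing $\delta > 0$ so small, using continuity of $\tilde f_1$ and $\tilde f_2$ jointly in $(x,\lambda)$, that for all $(x,\lambda)$ within $\delta$ of $(x_0,\lambda_0)$ one has $|\tilde f_i(x,\lambda) - \tilde f_i(x_0,\lambda_0)| < \varepsilon$ for $i = 1,2$. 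Then every convex combination $\alpha \tilde f_1(x,\lambda) + (1-\alpha)\tilde f_2(x,\lambda)$ lies within $\varepsilon$ of the corresponding combination $\alpha \tilde f_1(x_0,\lambda_0) + (1-\alpha)\tilde f_2(x_0,\lambda_0) \in F(x_0,\lambda_0)$, and in particular the pure endpoints $f_1(x,\lambda)$ and $f_2(x,\lambda)$ are $\varepsilon$-close to points of $F(x_0,\lambda_0)$. This shows $F(x,\lambda) \subset B_\varepsilon(F(x_0,\lambda_0))$ regardless of the sign of $\tilde h(x,\lambda)$, which is exactly joint upper-semicontinuity.

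I expect the bulk of the effort in a written proof to be only bookkeeping: stating which of the three cases for $\tilde h(x,\lambda)$ one is in at a nearby point and observing in each case that the value of $F$ there lives in an $\varepsilon$-neighborhood of the full segment $F(x_0,\lambda_0)$. No new idea beyond the joint continuity of $\tilde f_1, \tilde f_2$ and the convexity of $F(x_0,\lambda_0)$ is needed.
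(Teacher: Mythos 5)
Your proposal is correct and the hands-on verification you give --- pointwise compactness/convexity/non-emptiness for free, upper-semicontinuity at points off $\{h=0\}$ from continuity of $h$ and the $f_i$, and at points on $\{h=0\}$ by choosing $\delta$ so both $f_i$ move by less than $\varepsilon$ and comparing each convex combination (or endpoint) to the corresponding combination in the segment $F(x_0,\lambda_0)$ --- is essentially identical to the paper's proof. The preliminary reduction to Theorem \ref{convcomb} is a reasonable observation but unnecessary, and as you note its hypotheses are stated for open domains, so the direct argument is the right one to rely on.
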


\begin{proof}
For a fixed $\lambda_0$, the fact that $F(\cdot,\lambda_0)$ is compact, convex, and non-empty valued is obvious.  Therefore we only need to check that $F$ is upper-semicontinuous at any $(x_0,\lambda_0)$ in the domain.  

Fix $(x_0,\lambda_0)$.  If $h(x_0,\lambda_0)\neq 0$ then by the continuity of $h$ we can choose $(x,\lambda)$ close enough to $(x_0,\lambda_0)$ that $h(x,\lambda)\neq 0$, and so the upper-semicontinuity of $F$ at $(x_0,\lambda_0)$ follows trivially from the continuity of $f_1$ and $f_2$.  Therefore we will assume that $h(x_0,\lambda_0)=0$.  

Fix $\varepsilon>0$.  By the continuity of the $f_i$, there exist $\delta_1>0$ and $\delta_2>0$ such that $$|(x_0,\lambda_0)-(x,\lambda)|<\delta_i$$ implies that $$|f_i(x_0,\lambda_0)-f_i(x,\lambda)|<\varepsilon$$ Define $\delta=\min(\delta_1,\delta_2)$ and arbitrarily choose $(x,\lambda)$ such that $|(x_0,\lambda_0)-(x,\lambda)|<\delta$.  In this case it is possible for $h(x,\lambda)$ to be positive, negative, or zero.  

We will first address the case that $h(x,\lambda)\neq0$, and assume without loss of generality that $h(x,\lambda)>0$.  Note that $F(x,\lambda)=\{f_1(x,\lambda)\}$ and that $f_1(x_0,\lambda_0)\in F(x_0,\lambda_0)$.  By our choice of $\delta$ we have that $|f_1(x_0,\lambda_0)-f_1(x,\lambda)|<\varepsilon$, and so $F(x,\lambda)\subset B_\varepsilon(F(x_0,\lambda_0))$.

The remaining possibility is that $h(x,\lambda)=0$, so $$F(x,\lambda)=\{\alpha f_1(x,\lambda) + (1-\alpha)f_2(x,\lambda): \alpha\in[0,1]\}$$ Choose any arbitrary vector $v\in F(x,\lambda)$.  Then $v=\alpha f_1(x,\lambda) + (1-\alpha)f_2(x,\lambda)$ for some fixed $\alpha\in[0,1]$.  Note that for this fixed $\alpha$, the vector $$v_0:=\alpha f_1(x_0,\lambda_0) + (1-\alpha)f_2(x_0,\lambda_0)$$ lies in $F(x_0,\lambda_0)$.  Then by our choice of $\delta$,
\begin{align*}
|v_0-v|&=|(\alpha f_1(x_0,\lambda_0) + (1-\alpha)f_2(x_0,\lambda_0))-(\alpha f_1(x,\lambda) + (1-\alpha)f_2(x,\lambda))|\\
&= |\alpha (f_1(x_0,\lambda_0)- f_1(x,\lambda))+ (1-\alpha)(f_2(x_0,\lambda_0)-f_2(x,\lambda))|\\
&\leq \alpha|f_1(x_0,\lambda_0)- f_1(x,\lambda)|+(1-\alpha)|f_2(x_0,\lambda_0)-f_2(x,\lambda)|\\
&\leq \alpha\varepsilon+(1-\alpha)\varepsilon\\
&=\varepsilon
\end{align*}
Thus $F(x,\lambda)\subset B_\varepsilon(F(x_0,\lambda_0))$ and so $F$ is upper-semicontinuous.
\end{proof}

\subsection{Perturbing to Nearby Smooth Systems}

Considering the third criteria---that we should be able to perturb a differential inclusion to a nearby smooth system---is somewhat more subtle.  To understand what we mean by a nearby smooth system we will return once again to the example of section \ref{diffinc}.  As before, we begin with an open domain $G\subset\R^n$ which is split by the manifold $\Sigma=\{x\in G\,|\,h(x)=0\}$.  This time, to consider a bit more generality, we will assume that our piecewise-continuous map $f$ is not defined on $\Sigma$.  Then we can write $$f(x)=
\begin{cases}
f_1 (x), & h(x)>0 \\
f_2 (x), & h(x)< 0
\end{cases}$$

The Filippov convex combination method can still be used here just as it was in section \ref{diffinc}.  However, in \cite{jeffrey}, Jeffrey has shown that there are actually infinitely many smooth families of functions $\{f_\lambda\}$ which limit pointwise to $f$ away from $\Sigma$.  Moreover, it is possible that the differential equations $$\dot{x}= f_\lambda(x)$$ actually have qualitatively different behavior from the differential inclusion $$\dot{x}\in F(x)$$ where the Filippov convex combination method is used to transform $f$ into $F$.  

For a simple and concrete example of what that means, we will consider the piecewise-continuous function $$f(x)=
\begin{cases}
1, & x<0 \\
3, & x> 0
\end{cases}$$
Define the functions $$f_\lambda(x) = \tanh(\frac{x}{\lambda})+2\hspace{1cm} g_\lambda(x)=\tanh(\frac{x}{\lambda})+2-2*\text{e}*\mu(\frac{x}{\lambda})$$
where $\mu$ is the smooth mollifier $$\mu(x)=
\begin{cases}
0, & |x|>1 \\
\exp(\frac{-1}{1-x^2}), & |x|\leq 1
\end{cases}$$

As $\lambda\to 0$, both $f_\lambda$ and $g_\lambda$ limit to $f$ pointwise on its domain $\R\setminus\{0\}$.  However, the dynamics of the differential equations $$\dot{x}=f_\lambda(x)\hspace{1cm}\dot{x}=g_\lambda(x)$$ are qualitatively different.  For all $\lambda$, $\dot{x}=f_\lambda(x)$ has no equilibria, but for any $\lambda$ there is an equilibrium of the ODE $\dot{x}=g_\lambda(x)$.  

If we use the convex combination method on $f$ we get the set-valued map $$F_0(x)=
\begin{cases}
1, & x<0 \\
3, & x> 0 \\
[1,3], & x=0
\end{cases}$$
Qualitatively, the differential inclusion $\dot{x}\in F_0(x)$ behaves like the differential equations $\dot{x}=f_\lambda(x)$, and so it seems natural to consider $f_\lambda$ to be a nearby smooth system for $F_0$.  Indeed, if we define the function $F:\R\times[0,1]\to\R$ by $$F(x,\lambda)=
\begin{cases}
F_0(x), & \lambda = 0 \\
f_\lambda(x), & \lambda> 0
\end{cases}$$
we can easily verify that $F$ satisfies the Filippov conditions.  Therefore $f_\lambda$ can be said to be a perturbation of $F_0$ in our sense.  

\begin{figure}[h]
\begin{center}
\includegraphics[scale=0.3]{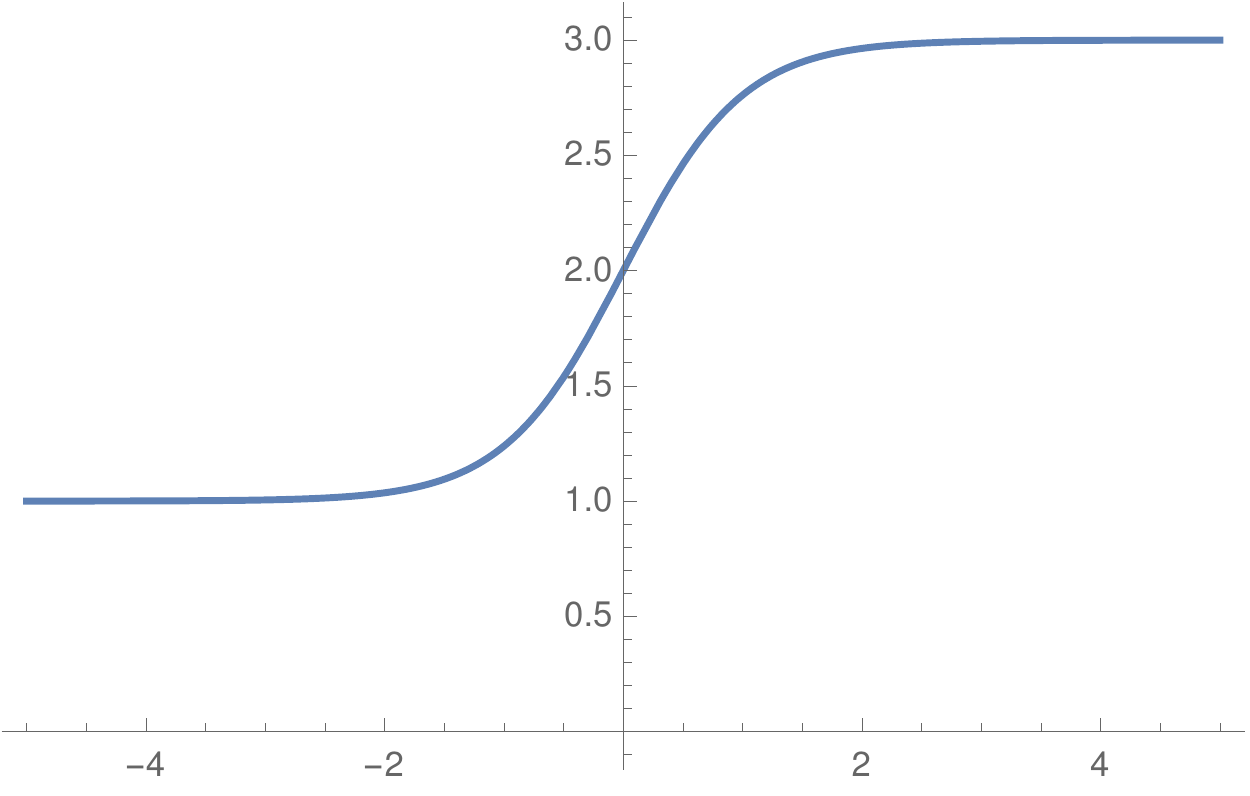}
\includegraphics[scale=0.3]{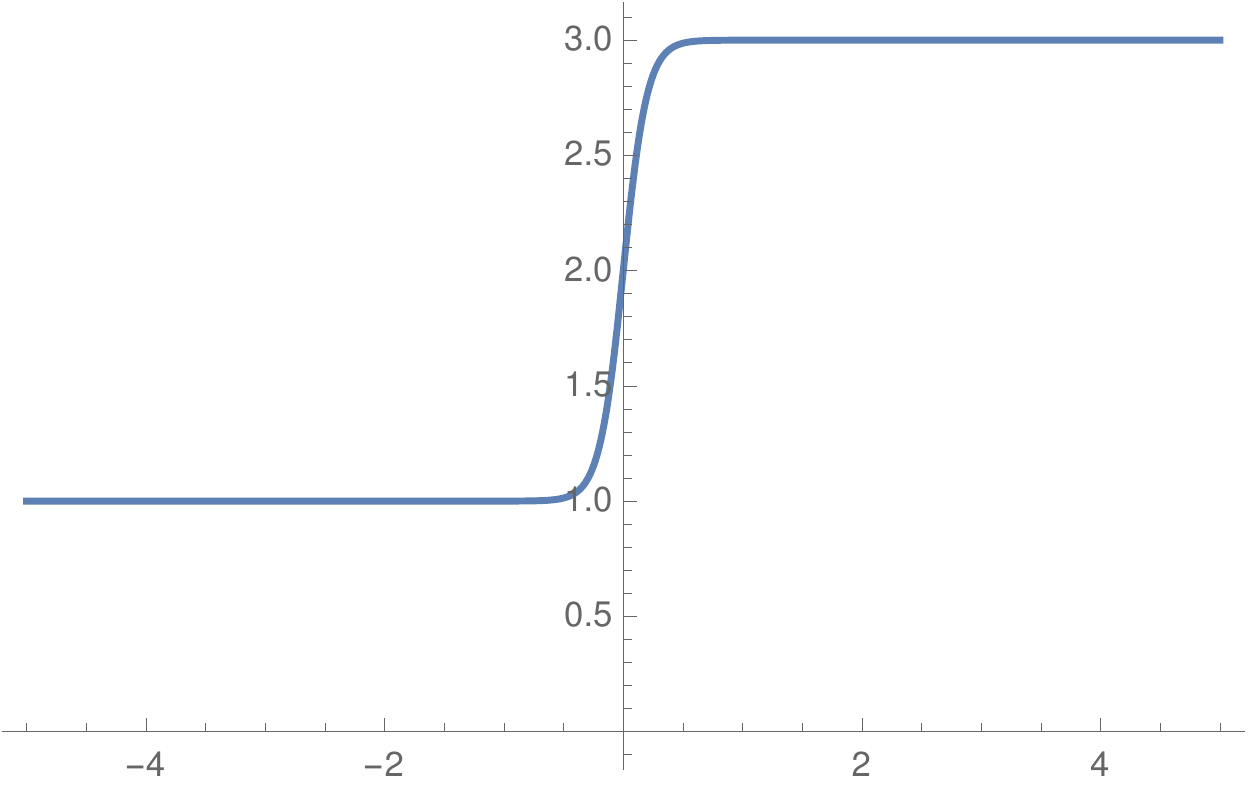}
\includegraphics[scale=0.3]{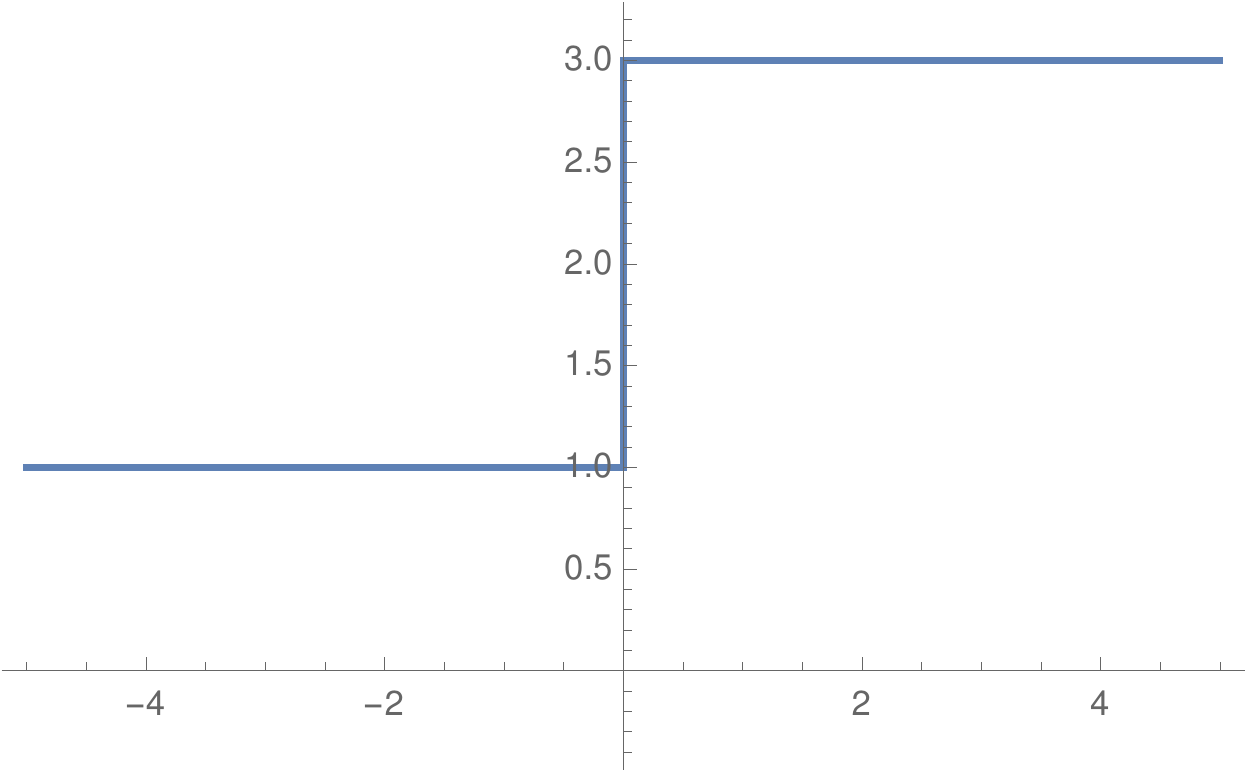}
\caption{The functions $f_1$, $f_{1/5}$, and $F_0$ respectively.}
\end{center}
\end{figure}

However, $g_\lambda$ is not an allowable perturbation of $F_0$; the function $$H(x,\lambda)=
\begin{cases}
F_0(x), & \lambda = 0 \\
g_\lambda(x), & \lambda> 0
\end{cases}$$ is not upper-semicontinuous.  

The more appropriate set-valued limit of $g_\lambda$ would be the function $$G_0(x)=
\begin{cases}
1, & x<0 \\
3, & x> 0 \\
[\tau,3], & x=0
\end{cases}$$
where $\tau=\min_{x\in\R}g_\lambda(x)<0$ (this minimum is independent of $\lambda$).  It is straightforward to verify that $G_0$ satisfies the Filippov conditions, and so we can reasonably consider the differential inclusion $\dot{x}\in G_0(x)$.  Moreover, note that $G_0$ preserves the qualitative features of $g_\lambda$; the function $\psi\equiv 0$ solves the differential inclusion $\dot{x}\in G_0(x)$, showing that this inclusion has an equilibrium.

Moreover, if we define the function $G:\R\times[0,1]\to\R$ by $$G(x,\lambda)=
\begin{cases}
G_0(x), & \lambda = 0 \\
g_\lambda(x), & \lambda> 0
\end{cases}$$
we can again verify that $G$ satisfies the Filippov conditions, and so $g_\lambda$ is an appropriate perturbation of $G_0$.

\begin{figure}[h]
\begin{center}
\includegraphics[scale=0.3]{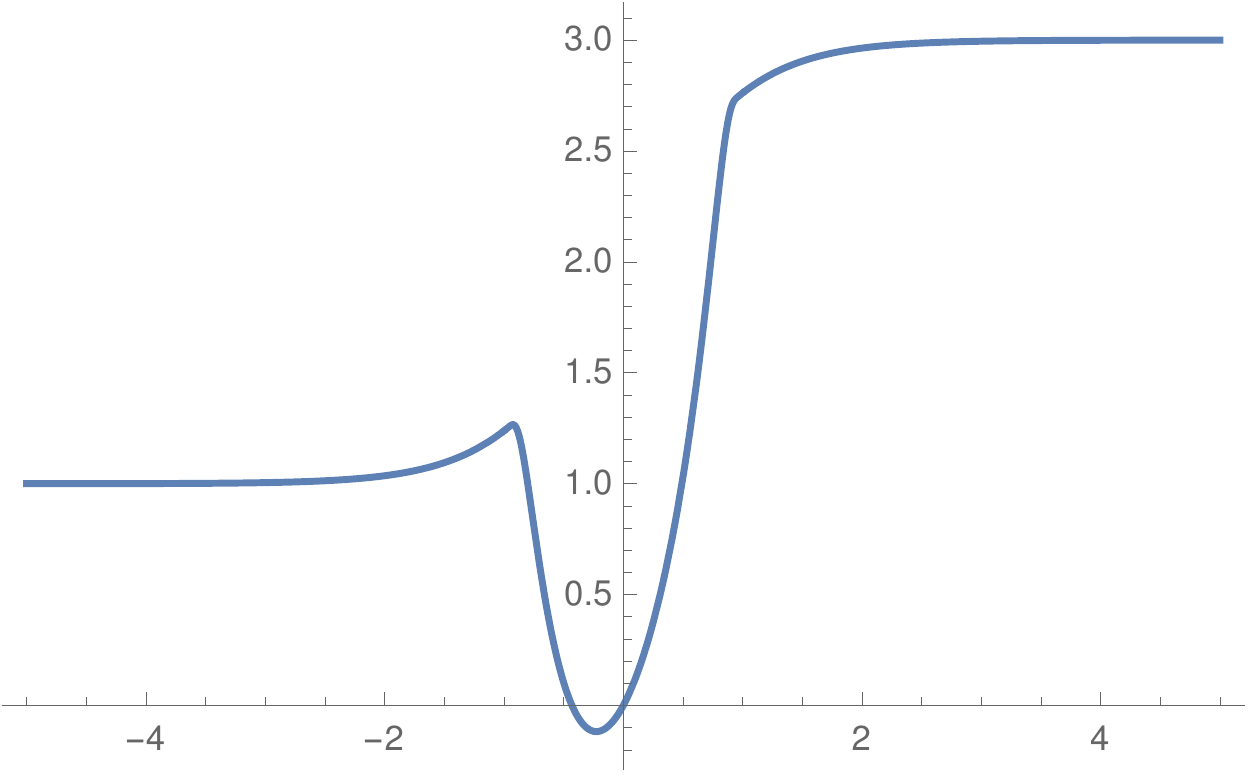}
\includegraphics[scale=0.3]{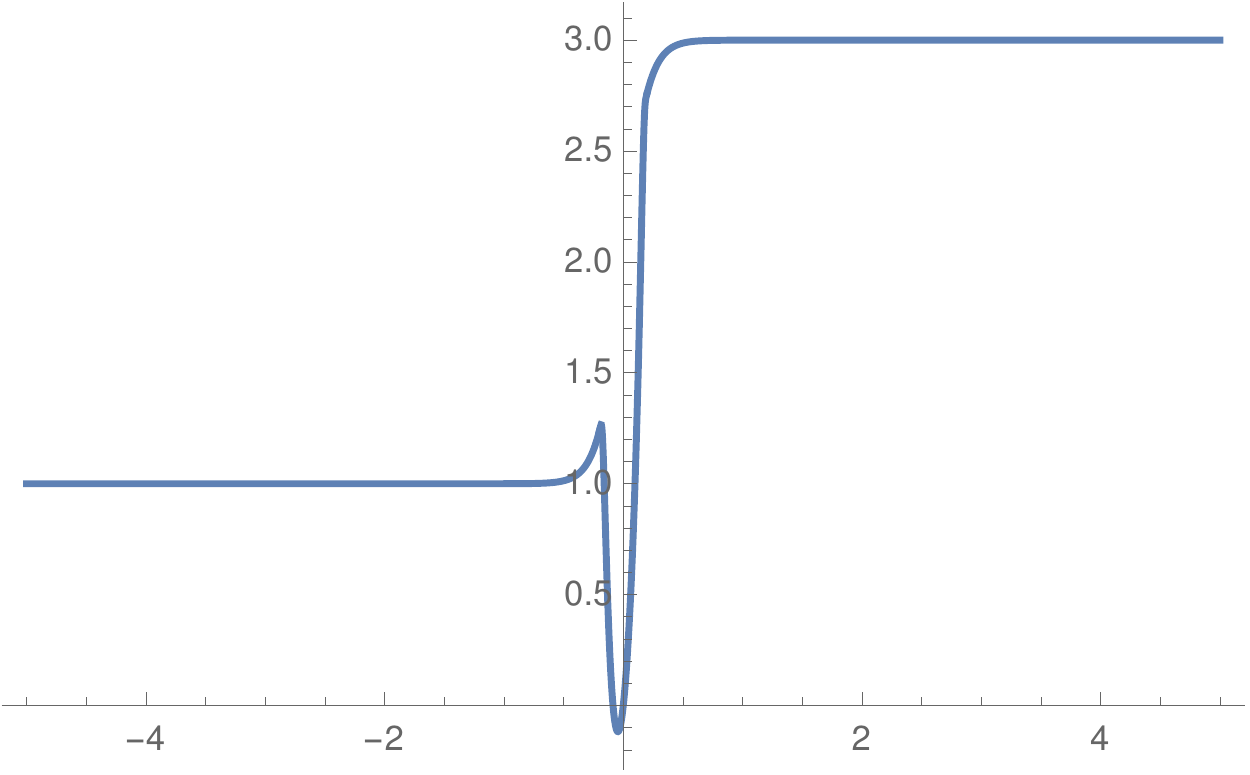}
\includegraphics[scale=0.3]{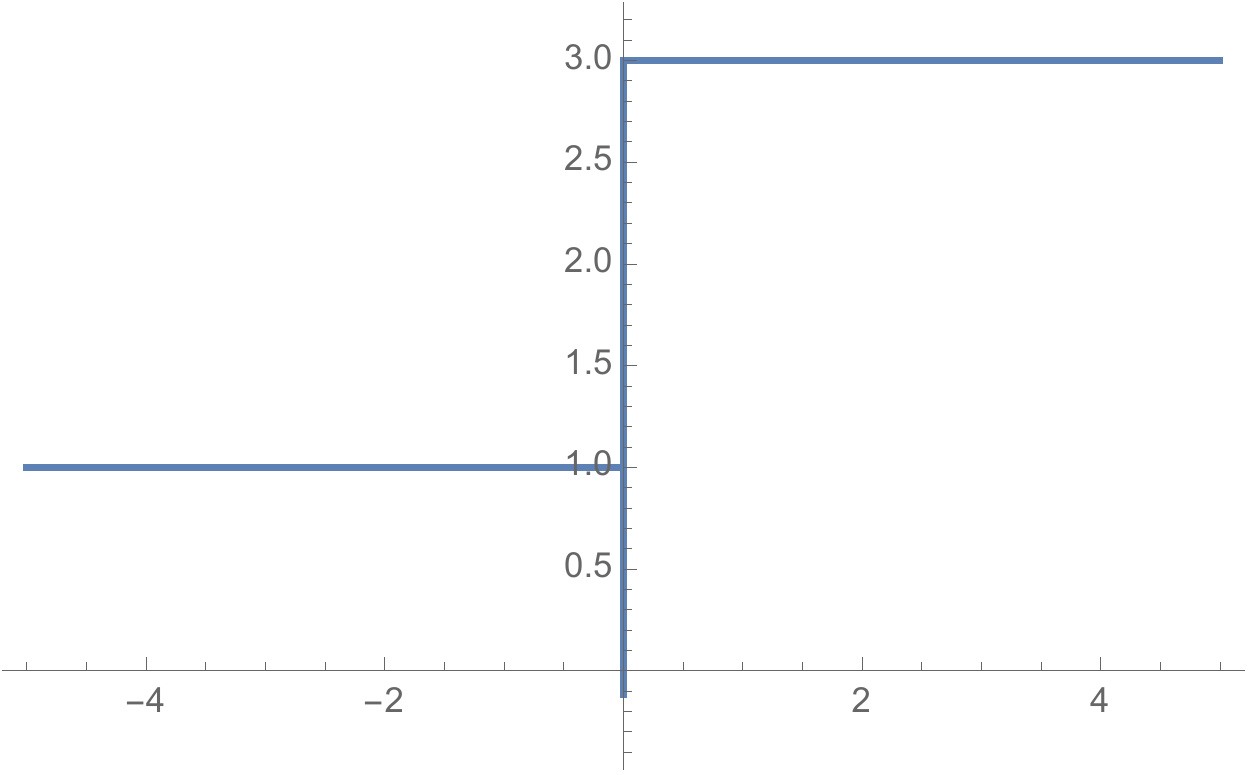}
\caption{The functions $g_1$, $g_{1/5}$, and $G_0$ respectively.}
\end{center}
\end{figure}

This example illustrates a general principle about these perturbations: piecewise-continuous systems may be the pointwise limit of a family of continuous systems, but that doesn't mean that family is an appropriate perturbation of the non-smooth system.  

Readers familiar with the Conley index may notice that in the prior example, if we take a neighborhood of $\Sigma=\{0\}$ and compute its index as we would in the smooth case (we note here that no proofs yet exist to claim that this index is meaningful in the case of differential inclusions, though we hope to change that) then the index is trivial for both $F_0$ and $G_0$.  That is, the Conley index does not distinguish between these systems even though our notion of perturbation does.  Therefore it is possible that the dynamics defined on $\Sigma$ may not effect the index (so long as the Filippov requirements are met) and a less restrictive notion of perturbation might exist \cite{mcspeech}.  However, what such a definition might be is unclear at this time.  

What this definition of perturbation does provide is a simple way to check whether a limiting smooth system is close to a given differential inclusion.  Since single-valued functions are automatically compact, convex, and non-empty valued (at each point) checking the Filippov conditions simply requires checking upper-semicontinuity, a task which is about as easy as checking continuity in the classical case.

\subsection{Perturbed Solutions}

When we study a perturbed differential inclusion we are of course interested in the associated perturbed solutions.  Fortunately for us, Filippov developed a notion of an approximate $\delta$-solutions to the differential inclusion $\dot{x}\in F(x)$ that plays quite nicely with our notion of perturbation.
%What saves us in this setting is a notion of an approximate $\delta$-solutions to the differential inclusion $\dot{x}\in F(x)$.  
This definition relies on the notation that for a set $A$, $co(A)$ denotes the smallest convex subset containing $A$. 

\begin{definition}
A \textbf{$\delta$-solution} of the differential inclusion $\dot{x}\in F(x)$ is an absolutely continuous function $y(\cdot)$ that almost everywhere satisfies the differential inclusion 
$$\dot{y}(t)\in F_\delta(y(t))$$
where $F_\delta(y):=B_\delta(co(F(B_\delta(y))))$
\end{definition}

One of Filippov's lemmas says that a uniformly convergent sequence of approximate solutions converges to an exact solution, assuming that the approximating radius also converges to zero:

\begin{lemma}\label{unisol}\cite{filippov}\cite{thieme}
Let $F(x)$ satisfy the basic conditions in a domain $G\subset\R^n$ and let $\delta_k\to 0$ as $k\to\infty$.  Then the limit $x(\cdot)$ of a uniformly convergent sequence $\{x_k:[a,b]\to G\}$ of $\delta_k$-solutions  to the differential inclusion $\dot{x}\in F(x)$ is a solution to that inclusion (as long as $x(t)\in G$).
\end{lemma}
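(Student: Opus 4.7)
The plan is to show that the uniform limit $x(\cdot)$ is itself absolutely continuous and satisfies $\dot{x}(t)\in F(x(t))$ almost everywhere, by passing to the limit in the defining inclusion $\dot{x}_k(t)\in F_{\delta_k}(x_k(t))$. Three ingredients drive the passage: upper-semicontinuity of $F$, closedness and convexity of its values, and the fact that $\delta_k\to 0$.

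First I would establish uniform Lipschitz bounds on the $x_k$. Since $x([a,b])$ is a compact subset of the open set $G$, one may choose a compact set $K\subset G$ containing an open neighborhood of $x([a,b])$. By uniform convergence and $\delta_k\to 0$, the inclusion $B_{\delta_k}(x_k(t))\subset K$ holds for every $t\in[a,b]$ once $k$ is large enough. Applying Filippov's boundedness lemma on $K$ produces a finite constant $M:=\sup_{y\in K}|F(y)|$, and the definition of $F_{\delta_k}$ then forces $|\dot{x}_k(t)|\leq M+\delta_k$ almost everywhere. The $x_k$ are therefore uniformly Lipschitz, and the Lipschitz bound transfers to the uniform limit $x$, so $x$ is absolutely continuous and differentiable almost everywhere.

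Next I would verify the inclusion at any point $t_0$ where $\dot{x}(t_0)$ exists. Fix $\varepsilon>0$. Upper-semicontinuity of $F$ at $x(t_0)$ supplies an $\eta>0$ with $F(y)\subset B_\varepsilon(F(x(t_0)))$ whenever $|y-x(t_0)|\leq\eta$. Continuity of $x$, combined with $x_k\to x$ uniformly and $\delta_k\to 0$, then yields an $h>0$ and a threshold $k_0$ so that for $k\geq k_0$ and $\tau\in[t_0-h,t_0+h]$, every point of $B_{\delta_k}(x_k(\tau))$ lies within $\eta$ of $x(t_0)$. Hence $F(B_{\delta_k}(x_k(\tau)))\subset B_\varepsilon(F(x(t_0)))$, and because $F(x(t_0))$ is convex, so is $B_\varepsilon(F(x(t_0)))$; taking convex hulls and inflating by $\delta_k$ yields $\dot{x}_k(\tau)\in F_{\delta_k}(x_k(\tau))\subset B_{\varepsilon+\delta_k}(F(x(t_0)))$.

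The endgame is then a convexity argument on averages. The identity
\[
\frac{x_k(t_0+h)-x_k(t_0-h)}{2h}=\frac{1}{2h}\int_{t_0-h}^{t_0+h}\dot{x}_k(\tau)\,d\tau
\]
places the left-hand side in the convex set $B_{\varepsilon+\delta_k}(F(x(t_0)))$. Sending $k\to\infty$ (the left side converges by uniform convergence and the inflation radius shrinks to $\varepsilon$) and then $h\to 0$ (using differentiability of $x$ at $t_0$) gives $\dot{x}(t_0)\in B_\varepsilon(F(x(t_0)))$. Since $\varepsilon>0$ was arbitrary and $F(x(t_0))$ is closed, $\dot{x}(t_0)\in F(x(t_0))$. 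The main obstacle is juggling the three simultaneous approximations---the $\delta_k$-inflation in the definition of a $\delta_k$-solution, the uniform convergence $x_k\to x$, and the passage from averaged difference quotients to the pointwise derivative---but the convexity condition in the Filippov hypotheses is exactly what lets the convex hull, the $\delta_k$-inflation, and the integral average all be absorbed into the single neighborhood $B_{\varepsilon+\delta_k}(F(x(t_0)))$ without the containment being lost.
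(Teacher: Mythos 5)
Your proof is correct, and since the paper does not reprove this lemma but defers to \cite{filippov} and \cite{thieme}, the right comparison is with Filippov's classical argument---which is exactly what you have reconstructed: uniform Lipschitz bounds from boundedness of $F$ on a compact neighborhood, absolute continuity of the limit, and then the averaged difference quotient $\frac{1}{2h}\int_{t_0-h}^{t_0+h}\dot{x}_k(\tau)\,d\tau$ trapped in the closed convex set $B_{\varepsilon+\delta_k}(F(x(t_0)))$ via upper-semicontinuity, with convexity and closedness of the values letting you pass to the limit in $k$, $h$, and $\varepsilon$. No changes needed.
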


These approximate solutions are very helpful because, in some sense, they give us the symmetry that upper-semicontinuity lacks.  In essence, we are saved because $F_\delta(y)$ first takes a $\delta$ neighborhood of $y$, then applies $F$, then convexifies, and then applies another $\delta$ neighborhood to the resulting image set.  The fact that we first consider a neighborhood around $y$, and then apply $F$, allows us to overcome the lack of uniformity in the upper-semicontinuity.  As an aside, we note that the convexification is not used directly in this paper, but it is needed so that the above lemma is true.

In order to utilize these approximate solutions we will show that perturbed solutions are approximate solutions.  In the following lemmas we will assume that our domain is compact.  This assumption is standard in Conley theory, and it will not limit our later results about isolating neighborhoods.

\begin{lemma}\label{pertappx}
Assume $X\subset\R^n$ is compact.  If $F:X\times[-1,1]\to\R^n$ satisfies the Filippov conditions, then for each $\varepsilon>0$ there exists a $\delta>0$ such that $|\lambda|<\delta$ implies that $$F(x,\lambda)\subset F_\varepsilon(x,0):=B_\varepsilon(co(F(B_\varepsilon(x),0)))$$
\end{lemma}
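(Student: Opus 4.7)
The plan is to leverage the compactness of $X$ to upgrade the pointwise upper-semicontinuity of $F$ into the uniform-in-$x$ statement the lemma requires. The guiding observation is that for any $x \in X$ and any auxiliary point $x_i$ with $|x - x_i| < \varepsilon$, we automatically have $F(x_i, 0) \subset F(B_\varepsilon(x), 0) \subset co(F(B_\varepsilon(x), 0))$; so a single-point upper-semicontinuity estimate centered at $(x_i, 0)$ will feed directly into the definition of $F_\varepsilon(x, 0)$, provided the representative $x_i$ sits within $\varepsilon$ of $x$.

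Concretely, fix $\varepsilon > 0$. For each $x_0 \in X$, apply the upper-semicontinuity of $F$ at $(x_0, 0) \in X \times [-1, 1]$ to produce some $\delta_{x_0} > 0$, which I immediately shrink so that $\delta_{x_0} < \varepsilon$, such that $|x - x_0| < \delta_{x_0}$ and $|\lambda| < \delta_{x_0}$ together imply $F(x, \lambda) \subset B_\varepsilon(F(x_0, 0))$. The open balls $\{B_{\delta_{x_0}/2}(x_0) : x_0 \in X\}$ cover $X$, and compactness yields a finite subcover indexed by $x_1, \ldots, x_k \in X$. Set $\delta := \min_{1 \leq i \leq k} \delta_{x_i}/2$.

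To conclude, pick any $x \in X$ and any $\lambda$ with $|\lambda| < \delta$. Choose $i$ so that $|x - x_i| < \delta_{x_i}/2$. Since $\delta_{x_i}/2 < \varepsilon$, we have $x_i \in B_\varepsilon(x)$, whence $F(x_i, 0) \subset co(F(B_\varepsilon(x), 0))$. On the other hand, $|x - x_i| < \delta_{x_i}$ and $|\lambda| < \delta \leq \delta_{x_i}$, so the upper-semicontinuity estimate at $(x_i, 0)$ fires and yields $F(x, \lambda) \subset B_\varepsilon(F(x_i, 0))$. Combining the two inclusions gives $F(x, \lambda) \subset B_\varepsilon(co(F(B_\varepsilon(x), 0))) = F_\varepsilon(x, 0)$, as desired.

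The main obstacle is precisely the non-uniformity of upper-semicontinuity that the paper flagged earlier; compactness of $X$ is what makes the bootstrap from pointwise to uniform possible, and the critical, if innocuous-looking, technical point is the enforcement $\delta_{x_0} < \varepsilon$ at the outset. Without that shrinking step, the representative $x_i$ drawn from the finite subcover would not be guaranteed to lie inside $B_\varepsilon(x)$, and the whole argument would fail to land the estimate inside the target set $F(B_\varepsilon(x), 0)$ before the final $\varepsilon$-enlargement is applied.
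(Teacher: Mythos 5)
Your proof is correct and follows essentially the same argument as the paper's: pointwise upper-semicontinuity at each $(x_0,0)$, shrinking each local radius below $\varepsilon$ so the covering point lands in $B_\varepsilon(x)$, a finite subcover by compactness, and the minimum over the finite collection as the uniform $\delta$. The only differences are bookkeeping (where the factor of $2$ and the cap at $\varepsilon$ are inserted), and your closing remark correctly identifies the one non-obvious technical point.
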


\begin{proof}
Choose $\varepsilon>0$.  Since $F$ is upper-semicontinuous in both $x$ and $\lambda$, for each $(x,0)$ in the subspace $X\times \{0\}$ there is some $\delta_x'$ such that
$$|(x,0)-(y,\lambda)|<\delta_x'\implies F(y,\lambda)\subset B_\varepsilon(F(x,0))$$
Let $\delta_x:=\min(\frac{\delta_x'}{2},\varepsilon)$. 
The sets $\{\text{int}(B_{\delta_x}(x))\}_{x\in X}$ cover $X$, and since the space $X$ is assumed to be compact we can find some finite subcover $\{\text{int}(B_{\delta_{x_k}}(x_k))\}_{k=1}^m$.  Let $$\delta:=\min_{1\leq k\leq m}\delta_{x_k}$$
The basic open sets $$\{\text{int}(B_{\delta_{x_k}}(x_k))\times (-\delta,\delta)\}_{k=1}^m$$ form an open cover of $X\times(-\delta,\delta)$, and so for any $(x,\lambda)\in X\times(-\delta,\delta)$, there is some $k$ such that 
$$|(x_k,0)-(x,\lambda)|\leq |(x_k,0)-(x,0)|+|(x,0)-(x,\lambda)|<\delta_{x_k}+\delta \leq \delta_{x_k}+\delta_{x_k}\leq\delta_{x_k}'$$
Thus $F(x,\lambda)\in B_\varepsilon(F(x_k,0))$.

Since we have assumed that $\delta_{x_k}\leq\varepsilon$, we have that $|x-x_k|<\varepsilon$.  Therefore 
$$ F(x,\lambda)\subset B_\varepsilon(F(x_k,0)) = B_\varepsilon(co(F(x_k,0)))\subset B_\varepsilon(co(F(B_\varepsilon(x),0)))=F_\varepsilon(x,0)  $$
\end{proof}

Our purpose in proving that lemma is to get the following corollary, which says that solutions to a perturbed differential inclusion are approximate solutions to the original differential inclusion.  

\begin{corollary}
If $F(x,\lambda)$ satisfies the Filippov conditions, then for each $\varepsilon>0$ there exists some $\delta>0$ such that $|\lambda|<\delta$ implies that solutions to the perturbed differential inclusion $\dot{x}\in F(x,\lambda)$ are $\varepsilon$-solutions of the differential inclusion $\dot{x}\in F(x,0)$.  
\end{corollary}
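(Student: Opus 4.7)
The plan is to unwind the definitions and observe that this corollary is essentially an immediate repackaging of Lemma \ref{pertappx}. Recall that an $\varepsilon$-solution of $\dot{x} \in F(x,0)$ is, by definition, an absolutely continuous function $y(\cdot)$ satisfying $\dot{y}(t) \in F_\varepsilon(y(t),0) = B_\varepsilon(co(F(B_\varepsilon(y(t)),0)))$ for almost every $t$. So the task reduces to showing that any derivative lying in $F(y(t),\lambda)$ automatically lies in $F_\varepsilon(y(t),0)$ once $|\lambda|$ is small enough.

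First I would fix $\varepsilon > 0$ and invoke Lemma \ref{pertappx} to produce a $\delta > 0$ such that $|\lambda| < \delta$ implies $F(x,\lambda) \subset F_\varepsilon(x,0)$ for every $x$ in the (compact) domain $X$. Then I would let $y(\cdot)$ be any solution of the perturbed inclusion $\dot{x} \in F(x,\lambda)$ for some $\lambda$ with $|\lambda| < \delta$. By definition, $y$ is absolutely continuous and $\dot{y}(t) \in F(y(t),\lambda)$ for almost every $t$. Applying the pointwise containment from Lemma \ref{pertappx} at $x = y(t)$ yields $\dot{y}(t) \in F_\varepsilon(y(t),0)$ almost everywhere, which is exactly the statement that $y$ is an $\varepsilon$-solution of $\dot{x} \in F(x,0)$.

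There is no real obstacle here; the corollary is a direct translation of the set-containment in Lemma \ref{pertappx} into the language of approximate solutions. The only care required is to note that the compactness hypothesis on $X$ used in Lemma \ref{pertappx} is inherited in the hypothesis of the corollary (which, as the authors remarked earlier, is the standard Conley-theoretic setting and does not limit the isolating-neighborhood results that follow). In short, the proof will consist of quoting the lemma and observing that the conclusion is exactly what the definition of a $\delta$-solution demands.
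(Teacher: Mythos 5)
Your proposal is correct and follows exactly the paper's own argument: fix $\varepsilon$, apply Lemma \ref{pertappx} to obtain $\delta$ with $F(x,\lambda)\subset F_\varepsilon(x,0)$ for $|\lambda|<\delta$, and then observe that any perturbed solution's derivative lands in $F_\varepsilon(\psi(t),0)$ almost everywhere. No gaps; the corollary really is just this pointwise containment translated into the language of $\varepsilon$-solutions.
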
 

\begin{proof}

A solution to the perturbed differential inclusion $\dot{x}\in F(x,\lambda)$ is an absolutely continuous function $\psi$ mapping from an interval of the real line into the state space which satisfies $$\dot{\psi}(t)\in F(\psi(t),\lambda)$$ almost everywhere.
  
By lemma \ref{pertappx}, for each $\varepsilon>0$ there exists $\delta>0$ such that $|\lambda|<\delta$ implies that $F(x,\lambda)\subset F_\varepsilon(x,0)$ for all $x$.  Then for such $\lambda$, $$\dot{\psi}(t)\in F(\psi(t),\lambda)\subset F_\varepsilon(\psi(t),0)$$ wherever the derivative exists (almost everywhere).

\end{proof}

We will also state the following simple lemma, which is needed for the proof of our main result and may be interesting in its own right.

\begin{lemma}\label{lipsch}
Assume $F$ satisfies the Filippov conditions and fix $\delta>0$.  For any $\varepsilon$ such that $0<\varepsilon<\delta$, all $\varepsilon$-solutions to the differential inclusion $\dot{x}\in F(x)$ which lie in a compact domain $X$ share a uniform Lipschitz bound (which does not depend on $\varepsilon$).  Thus $\varepsilon$-solutions are uniformly equicontinuous irrespective of $\varepsilon$.   
\end{lemma}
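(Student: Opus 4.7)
The plan is to produce an explicit Lipschitz constant that depends only on $F$, $X$, and $\delta$, and in particular is independent of $\varepsilon$. The starting observation is that because every $\varepsilon$-solution $y$ takes values in the compact set $X$ and $\varepsilon<\delta$, the set $B_\varepsilon(y(t))$ is contained in the fixed compact set $B_\delta(X)$ for every $t$. Since $F$ satisfies the Filippov conditions, the boundedness lemma quoted earlier (Filippov's result that upper-semicontinuity on a compact set implies boundedness) applied to the compact set $B_\delta(X)$ yields a constant $M$ with $|F(x)|\leq M$ for every $x\in B_\delta(X)$.

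The key step is then to unwind the definition $F_\varepsilon(y)=B_\varepsilon(co(F(B_\varepsilon(y))))$ and control its norm. Every vector in $F(B_\varepsilon(y(t)))$ has norm at most $M$; since the norm is a convex function, convex combinations preserve this bound, so every vector in $co(F(B_\varepsilon(y(t))))$ still has norm at most $M$; finally, thickening by $\varepsilon$ increases the norm by at most $\varepsilon<\delta$. Hence every vector in $F_\varepsilon(y(t))$ has norm at most $M+\delta$. Because $y$ is an $\varepsilon$-solution, $\dot{y}(t)\in F_\varepsilon(y(t))$ almost everywhere, giving $|\dot{y}(t)|\leq M+\delta$ a.e. Since $y$ is absolutely continuous, integrating the derivative along any subinterval yields $|y(t)-y(s)|\leq (M+\delta)|t-s|$, so every $\varepsilon$-solution is $(M+\delta)$-Lipschitz with a constant that depends only on $F$, $X$, and $\delta$. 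Uniform equicontinuity over all $\varepsilon$-solutions and all $\varepsilon\in(0,\delta)$ is then automatic.

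The only real subtlety is a bookkeeping one: Filippov's boundedness lemma requires a compact set on which $F$ is defined and upper-semicontinuous, and the bound obtained must not depend on the individual $\varepsilon$-solution. The hypothesis resolves this by fixing $\delta$ at the outset, which pins down the single compact set $B_\delta(X)$ on which all of the work can be done uniformly; this is precisely why the statement quantifies over $\varepsilon<\delta$ rather than over arbitrary $\varepsilon>0$. Beyond this observation, no deeper obstacle is expected.
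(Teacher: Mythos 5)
Your proof is correct and follows essentially the same route as the paper: bound $F$ on a compact set, deduce $|F_\varepsilon|\leq M+\delta$, and integrate the derivative. In fact you are slightly more careful than the paper's own version, which takes the bound $M$ only over $X$ even though $F_\varepsilon(x)$ involves $F$ evaluated on $B_\varepsilon(x)$; your use of $B_\delta(X)$ closes that small gap.
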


\begin{proof}

Since upper-semicontinuous functions are bounded on compact domains, there is some $M>0$ such that $|F(x)|<M$ for all $x\in X$.  Note that $|F_\varepsilon(x)|\leq M+\delta$.  Then for any $\varepsilon$-solution $\psi: I\to X$,
$$|\psi(t)-\psi(s)|=|\int_s^t\dot{\psi}(\tau)d\tau|\leq|\int_s^t |\dot{\psi}(\tau)|d\tau|\leq|\int_s^t (M+\delta)\, d\tau|=(M+\delta)|t-s|$$
%|\int_0^t\psi(\tau)d\tau-\int_0^s\psi(\tau)d\tau|=
\end{proof}

Because we are interested in discussing invariant sets, we are naturally interested in solutions which are defined for all time.  Since lemma \ref{unisol} only applies to solutions defined on a compact interval we will extend it with the following simple lemma.

\begin{lemma}\label{extunisol}
Assume that $X\subset\R^n$ is compact and that $F:X\to\R^n$ satisfies the Filippov conditions.  Let $\delta_k\to 0$ as $k\to\infty$.  Given any sequence $\{x_k:\R\to X\}_{k=1}^\infty$ of $\delta_k$-solutions  to the differential inclusion $\dot{x}\in F(x)$, there is an exact solution $$x:\R\to X$$ to that inclusion such that on any compact interval $[a,b]\subset\R$, there is a subsequence of the restricted family $$\{x_k|_{[a,b]}:[a,b]\to X\}_{k=1}^\infty$$ which converges uniformly to $x|_{[a,b]}$.
\end{lemma}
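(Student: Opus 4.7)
The plan is to combine a uniform equicontinuity estimate with Arzelà--Ascoli and a standard diagonal argument, then apply Lemma \ref{unisol} on each compact interval to obtain an exact solution on all of $\R$.

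First, I would invoke Lemma \ref{lipsch}: fix any $\delta>0$, and for all $k$ sufficiently large the assumption $\delta_k\to 0$ gives $\delta_k<\delta$, so each $x_k$ is Lipschitz on $\R$ with a common Lipschitz constant $L=M+\delta$, where $M$ bounds $|F|$ on the compact set $X$. Discarding finitely many terms, I may assume this Lipschitz bound holds for every $x_k$. Combined with $x_k(\R)\subset X$ and the compactness of $X$, this means $\{x_k\}$ is uniformly bounded and uniformly equicontinuous on every compact interval of $\R$.

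Next, I would construct nested subsequences. Apply Arzelà--Ascoli to $\{x_k|_{[-1,1]}\}$ to extract a subsequence $\{x_k^{(1)}\}$ converging uniformly on $[-1,1]$. Inductively, for each $n\ge 2$, apply Arzelà--Ascoli to $\{x_k^{(n-1)}|_{[-n,n]}\}$ to extract a subsequence $\{x_k^{(n)}\}$ that converges uniformly on $[-n,n]$. Form the diagonal sequence $y_k:=x_k^{(k)}$. For each fixed $n$, the tail $\{y_k\}_{k\ge n}$ is a subsequence of $\{x_k^{(n)}\}$, so $y_k$ converges uniformly on $[-n,n]$. Define $x:\R\to X$ by the pointwise limit, which is well-defined since the uniform limits on nested intervals agree; closedness of $X$ ensures $x(t)\in X$ for every $t$. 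Now apply Lemma \ref{unisol} on each $[-n,n]$: since $\{y_k|_{[-n,n]}\}$ is a sequence of $\delta_{k_n}$-solutions (with $\delta_{k_n}\to 0$ along the diagonal) converging uniformly to $x|_{[-n,n]}$, the restriction $x|_{[-n,n]}$ is an exact solution to $\dot{x}\in F(x)$. Because this holds for every $n$ and $\R=\bigcup_n[-n,n]$, $x$ solves the inclusion on all of $\R$. Finally, for any compact $[a,b]\subset\R$, pick $n$ with $[a,b]\subset[-n,n]$; the restriction of the diagonal sequence $\{y_k|_{[a,b]}\}$ is a subsequence of $\{x_k|_{[a,b]}\}$ converging uniformly to $x|_{[a,b]}$.

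There is no serious obstacle, as the argument is classical; the only points that require slight care are that the Lipschitz bound from Lemma \ref{lipsch} must be independent of $k$ (which it is, via the global bound $M+\delta$), and that the diagonal limit $x$ actually takes values in $X$ (which follows from closedness of $X$). The genuine substance of the lemma is already contained in Lemma \ref{unisol}; this extension is essentially a compactness bookkeeping step.
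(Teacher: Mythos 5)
Your proof is correct and follows essentially the same route as the paper's: uniform Lipschitz bounds from Lemma \ref{lipsch}, Arzel\`a--Ascoli on nested intervals $[-n,n]$ with successive subsequences, Lemma \ref{unisol} to identify each limit as an exact solution, and gluing. Your explicit diagonal sequence is a minor refinement (it yields a single subsequence working on all compact intervals, slightly more than the statement demands), but the substance is identical.
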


\begin{proof}
The idea of this proof is relatively straightforward but may be obfuscated by indexing, and so we will first sketch the basic intuition.  We consider a nested sequence of compact intervals that grow to encompass all of $\R$.  On each interval the Arzela-Ascoli theorem gives us a uniformly convergent subsequence that limits to a continuous function defined on that compact interval; by lemma \ref{unisol}, that function is a solution to $\dot{x}\in F(x)$.  In successive steps we take subsequences of the prior subsequences so that the limit function defined on the larger interval agrees with the limit function on any smaller interval.  Then we use these limit functions in order to pointwise define the function $x(\cdot)$ on all of $\R$ that satisfies the desired requirements.

In more detail, the proof goes as follows:

First, note that all $x_k(\cdot)$ are uniformly Lipschitz by lemma \ref{lipsch}.  More to the point, they are uniformly equicontinuous.

For all $k$, the function $x_k$ is defined on the interval $[-1,1]$, and so we can consider the restricted family $\{x_k|_{[-1,1]}:[-1,1]\to X\}_{k=1}^\infty$.  Since this family of functions is uniformly bounded (contained in $X$) and equicontinuous, by the Arzela-Ascoli theorem there is some subsequence $\{x_{k_i}|_{[-1,1]}:[-1,1]\to X\}_{i=1}^\infty$ which converges uniformly to a continuous function $x^1:[-1,1]\to X$.  By lemma \ref{unisol}, $x^1$ is a solution of the differential inclusion $\dot{x}\in F(x)$.  

In the next step, we note that $x_{k_i}$ is defined on the interval $[-2,2]$ for all $i$, and so we may consider the uniformly bounded and equicontinuous family $\{x_{k_i}|_{[-2,2]}:[-2,2]\to X\}_{i=1}^\infty$.  As in the last step, this has some subsequence which converges to a solution $x^2:[-2,2]\to X$.  Here we note that since $x_{k_i}(t)\to x^1(t)$ for $t\in[-1,1]$, we have that $x^2|_{[-1,1]}\equiv x^1$.

We iterate this process inductively.  That is, at step $m$ we use the subsequence defined in step $(m-1)$ and then pass to another subsequence that converges on the larger interval $[-m,m]$.  In this way we get subsequences of $\{x_k|_{[-m,m]}:[-m,m]\to X\}_{k=1}^\infty$ which converge uniformly to continuous functions $x^m:[-m,m]\to X$ and satisfy $$x^m|_{[-q,q]}\equiv x^q$$ for any $q\leq m$.  By lemma \ref{unisol}, each of these functions is a solution to the differential inclusion on its domain.

Finally, we pointwise define the function $x(t):=x^m(t)$, where here $m$ is the least integer greater than or equal to $t$.  To see that for any compact interval $[a,b]$ there is a subsequence of the restricted family $\{x_k|_{[a,b]}:[a,b]\to X\}_{k=1}^\infty$ which converges uniformly to $x|_{[a,b]}$, simply take $m$ large enough that $[a,b]\subset[-m,m]$ and use the subsequence from step $m$ of the induction.  

By lemma \ref{unisol}, $x|_{[a,b]}$ is a solution for the interval $[a,b]$.  Since all solutions share a uniform Lipschitz bound we see that $x$ is Lipschitz continuous on all of $\R$ and thus absolutely continuous.  Then on any interval $[-m,m]$, for almost all $t\in [-m,m]$, the function $x(\cdot)$ is differentiable at $t$ and 
$$\dot{x}(t)=\dot{x}|_{[-m,m]}(t)\in F(x|_{[-m,m]}(t))=F(x(t))$$

For any $t\in \R$ we can choose $m$ large enough that $t\in[-m,m]$, and so $x$ is an exact solution to the differential inclusion $\dot{x}\in F(x)$ on all of $\R$.  
\end{proof}

The lemmas stated and proven in this section provide all of the information about solutions to differential inclusions that we will need for our purposes.  Before we can discuss isolating neighborhoods, however, we need some way to organize the full solution set of a differential inclusion.  

\section{Multiflows: A Generalization of Flows for Filippov Systems}

In dynamical systems we are usually concerned with the full solution set of an ordinary differential equation $\dot{x}= f(x)$. If $f$ is globally Lipschitz then this set can be neatly described as a flow.  A flow on a metric space $X$ is a continuous map $\varphi : \R \times X \mapsto X$ such that
\begin{enumerate}
\item $\varphi(0, x_0 ) = x_0$
\item $\varphi(s, \varphi(t, x_0 )) = \varphi(s + t, x_0 )$
\end{enumerate}

The differential equation $\dot{x}=f(x)$ gives rise to a flow by letting $\varphi(t, x_0 )$ be the solution $x(t)$ with the initial condition $x(0) = x_0$.  If $f$ is only locally Lipschitz then the differential equation gives rise to a similar object, a \textit{local flow}, whose domain is an open subset of $\R\times X$.  

Since differential inclusions do not have unique solutions---a given initial condition may be sent to infinitely many different locations at a fixed time $t$---we cannot study them with single-valued maps like flows.  Additionally, the group action from $\R$ must be reduced to a monoid action from $\R^+:=\{t\in\R|t\geq 0\}$ because trajectories may intersect \cite{thieme}.  To deal with these complications, Richard McGehee has proposed a different object, the \textit{multiflow}, which generalizes flows to this setting \cite{mcspeech}.  For a more complete exposition on this subject (and differential inclusions in general), the reader is referred to an earlier paper \cite{thieme}.

\begin{definition}
A \textbf{multiflow} on a metric space $X$ is a set-valued map $$\Phi:\R^+\times X\to\Pw(X)$$ which is upper-semicontinuous and compact-valued, and which satisfies the monoid properties:
\begin{itemize}
\item $\Phi^0(x)=\{x\}$
\item $\Phi^t(\Phi^s(x))=\Phi^{t+s}(x)$
\end{itemize}
\end{definition}

The above definition relies on the notation that for $A\subset X$, we write 
$$\Phi^t(\cdot)=\Phi(t,\cdot)\, \text{and}\,\Phi^t(A)=\cup_{x\in A}\Phi^t(x)$$
Additionally, if $I\subset\R^+$, we write
$$\Phi(I,A)=\Phi^I(A)=\cup_{t\in I}\cup_{x\in A}\Phi(t,x)$$

One interesting property of multiflows is that it is entirely possible that $\Phi(t,x)=\emptyset$ for some $(t,x)\in\R^+\times X$.  This feature is both an advantage and a complication in this setting.  One advantage because there is no need to talk about anything like a local flow.  For example, the differential equation $\dot{x}=x^2$ gives only a local flow because of finite time blowup.  However, the set of all solutions to this differential equation forms a multiflow over $\R$, where $\Phi(t,x)=\emptyset$ for any $t$ after the finite time blowup of $x$.  The disadvantage, however, is that the empty set makes putting a topology on multiflows more complicated.  

But the main advantage of allowing $\Phi$ to map into the empty set is that it allows us to more easily examine solutions only until they leave a compact domain.  That is, for a compact set $X$, we can consider the set of all solutions $$\{x:[0,T]\to X\,|\,T\in\R^+,\,\dot{x}(t)\in F(x(t))\,\text{a.e.}\}$$ and ignore the extension of these solutions outside of this codomain.  This perspective is natural for Conley index theory, where computation of the index relies only on a compact isolating neighborhood $N$, allowing us to avoid some of the complications of global dynamics.  It is this perspective shift that primarily distinguishes our work on multiflows from the work done by Mrozek in \cite{mrozek}, where the Conley index theory has also been applied to differential inclusions.  

With this perspective we can show that any differential inclusion which satisfies the Filippov conditions gives rise to a multiflow on a compact space.  Let $\dot{x}\in F(x)$ be any Filippov system defined on an open domain $G\subset\R^n$, and let $X$ be any compact subset of $G$. We are interested in considering all solutions of the differential inclusion which are contained entirely in $X$.  If a solution begins in $X$, but then leaves, we only monitor the solution up until the time it leaves.  We want to show that the union of all of these solutions forms a multiflow.
 
More explicitly, define $\Phi:\R^+\times X\to\Pw(X)$ by saying that $b\in\Phi(T,a)$ if and only if there exists a solution $x:[0,T]\to X$ to the differential inclusion $\dot{x}\in F(X)$ with $x(0)=a$ and $x(T)=b$.  Then the set $\Phi$ is a multiflow over $X$; a proof of this result can be found in \cite{thieme}.  Because of its importance to our paper, we will state this result here as a theorem.  

\begin{theorem}\label{mtflw}\cite{thieme}
Let $G$ be an open subset of $\R^n$, let $X\subset G$ be compact, and let $F:G\to\Pw(\R^n)$ satisfy the Filippov conditions.  Define the set-valued map $$\Phi:\R^+\times X\to \Pw(X)$$ by saying that $b\in\Phi(T,a)$ if and only if there exists a solution $x:[0,T]\to X$ to the differential inclusion $\dot{x}\in F(X)$ with $x(0)=a$ and $x(T)=b$.

Then $\Phi$ is a multiflow over $X$.
\end{theorem}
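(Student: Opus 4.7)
The plan is to verify in turn the three conditions defining a multiflow: the monoid identities, compact-valuedness of $\Phi$, and upper-semicontinuity of $\Phi$. The monoid conditions are essentially bookkeeping on solutions. The identity $\Phi^0(a)=\{a\}$ is immediate because the only candidate function on the degenerate interval $[0,0]$ is the constant map with value $a$. For $\Phi^t(\Phi^s(a))=\Phi^{t+s}(a)$, the inclusion $\subset$ follows by concatenating a solution $x_1:[0,s]\to X$ from $a$ to some $b$ with a solution $x_2:[0,t]\to X$ from $b$ to $c$ into a single absolutely continuous function on $[0,s+t]$ whose derivative satisfies the inclusion almost everywhere (the join point has measure zero); the reverse inclusion is obtained by restricting any witness $x:[0,s+t]\to X$ to $[0,s]$ and to $[s,s+t]$ after a time shift.

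For compact-valuedness, I would fix $(T,a)\in\R^+\times X$ and a sequence $b_k\in\Phi(T,a)$ with witnessing solutions $x_k:[0,T]\to X$ satisfying $x_k(0)=a$ and $x_k(T)=b_k$. Since $F$ is upper-semicontinuous and $X$ is compact, there is a uniform bound $|F|\le M$ on $X$, making each $x_k$ uniformly $M$-Lipschitz. The Arzela-Ascoli theorem yields a subsequence $x_{k_j}$ converging uniformly on $[0,T]$ to some continuous $x:[0,T]\to X$. Treating each $x_{k_j}$ as a $\delta_j$-solution for any $\delta_j\downarrow 0$, Lemma \ref{unisol} forces $x$ to be an exact solution, so $b_{k_j}=x_{k_j}(T)\to x(T)\in\Phi(T,a)$ and $\Phi(T,a)$ is sequentially, hence topologically, compact.

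Upper-semicontinuity is the main point, and I expect it to be the principal obstacle because the witnessing solutions now live on intervals $[0,T_k]$ of varying length, so a single application of Arzela-Ascoli does not suffice. I would proceed by contradiction: assume there exist $\varepsilon>0$, a sequence $(T_k,a_k)\to(T_0,a_0)$, and $b_k\in\Phi(T_k,a_k)$ with $d(b_k,\Phi(T_0,a_0))\ge\varepsilon$ (using the convention $d(\cdot,\emptyset)=+\infty$). Passing to a subsequence I may assume $T_k$ is monotone. In the case $T_k\ge T_0$ the restrictions $x_k|_{[0,T_0]}$ are solutions on a fixed compact interval and the compact-valuedness argument essentially reapplies; in the harder case $T_k\uparrow T_0$ I would pick an exhaustion $T'_n\uparrow T_0$ and combine Arzela-Ascoli on each $[0,T'_n]$ with Lemma \ref{unisol} in a diagonal-subsequence argument to produce a solution $y:[0,T_0)\to X$ with $y(0)=a_0$, which extends uniquely to a Lipschitz (hence absolutely continuous) solution $y:[0,T_0]\to X$. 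A triangle inequality of the form $|b_{k_j}-y(T_0)|\le|x_{k_j}(T_{k_j})-x_{k_j}(T'_n)|+|x_{k_j}(T'_n)-y(T'_n)|+|y(T'_n)-y(T_0)|$, with the first and last terms controlled by the Lipschitz bound $M$ and $|T'_n-T_0|$ and the middle term by uniform convergence, forces $b_{k_j}\to y(T_0)\in\Phi(T_0,a_0)$, contradicting the standing assumption. The same argument shows that $\{(T,a):\Phi(T,a)\ne\emptyset\}$ is closed, which handles the subtlety of upper-semicontinuity at points where $\Phi$ is empty-valued, and the degenerate case $T_0=0$ reduces to a direct Lipschitz estimate $|b-a_0|\le MT+|a-a_0|$.
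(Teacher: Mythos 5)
You should note at the outset that this paper does not actually prove Theorem \ref{mtflw}; it defers entirely to the reference from which the theorem is cited. So there is no in-text proof to compare against, and your argument has to stand on its own. Judged that way, it is correct, and it uses exactly the toolkit this paper deploys in its other proofs: a uniform Lipschitz bound for solutions confined to a compact set (as in Lemma \ref{lipsch}), Arzela--Ascoli, and Lemma \ref{unisol} to upgrade a uniform limit of (approximate) solutions to an exact solution. Your treatment of the monoid identities by concatenation and restriction is standard and fine, and you correctly single out the two genuine subtleties: upper-semicontinuity in the time variable, where the witnessing solutions live on intervals $[0,T_k]$ of varying length (handled by restriction when $T_k\ge T_0$, and by an exhaustion-plus-diagonal argument with a Lipschitz extension to the right endpoint when $T_k\uparrow T_0$), and the behaviour at points where $\Phi$ is empty-valued, where upper-semicontinuity amounts to openness of the set on which $\Phi$ is empty and follows from the same compactness argument.

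Two small points are worth making explicit rather than leaving implicit. First, the application of Lemma \ref{unisol} to a sequence of exact solutions requires the observation that an exact solution is a $\delta$-solution for every $\delta>0$, since $F(y)\subset F_\delta(y)$; you invoke this silently when you "treat each $x_{k_j}$ as a $\delta_j$-solution." Second, Lemma \ref{unisol} only guarantees the limit is a solution provided it stays in $G$; this holds here because uniform limits of $X$-valued functions are $X$-valued ($X$ is closed) and $X\subset G$, and the same remark justifies $y(T_0)\in X$ after the Lipschitz extension in the $T_k\uparrow T_0$ case. Neither is a gap, but both belong in a written-out version of the proof.
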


Notice that this theorem is very general; any continuous, single-valued function $f$ satisfies the Filippov requirements.  Therefore multiflows gives us a framework to discuss Conley theory on compact sets very directly for an extremely large class of dynamical systems.  

One unfortunate drawback to multiflows is that, like semiflows, they consider only forward time.  This feature is a necessity; it is impossible to retain the group action of all of $\R$ \cite{thieme}.  So that we may consider solutions in backwards time we will introduce the dual multiflow $\Phi^*:\R^-\times X\times X$, which we define pointwise by $$\Phi^*(T,a):=\{b\in X|a\in\Phi(-T,b)\}$$

Essentially, we may think of this object as the backwards time equivalent of the multiflow $\Phi$.  If $\Phi$ arises from the differential inclusion $\dot{x}\in F(x)$, as in theorem \ref{mtflw}, then it is straightforward to verify that $\Phi^*$ is the set of all solutions to the same differential inclusion in backwards time.  That is, $$b\in\Phi^*(T,a)$$ if and only if there is a solution $$x:[T,0]\to X$$ satisfying $x(0)=a$ and $x(T)=b$.  

Finally, if the multiflow $\Phi$ arises from the differential inclusion $\dot{x}\in F(x)$ then we will call solutions of that differential inclusion \textit{orbits} on $\Phi$.  Note that if an orbit $\psi$ has a domain that includes both positive and negative times then its image lies in both $\Phi$ and $\Phi^*$.  To be more specific about what that means, assume that $I\subset\R$ is an interval around $0$ and $\psi:I\to X$ satisfies $\dot{\psi}(t)\in F(\psi(t))$ for almost all $t\in I$.  If $T\in I$ is positive then $$\psi(T)\in\Phi(T,\psi(0))$$ and if $T\in I$ is negative then $$\psi(T)\in\Phi^*(T,\psi(0))$$

\section{Isolating Neighborhoods and Isolated Invariant Sets}

We are now finally prepared to begin discussing the notions of invariant sets and isolating neighborhoods.

\begin{definition}
For a multiflow $\Phi:\R^+\times X\to \Pw(X)$ on a compact subset $X\subset\R^n$, a set $S\subset X$ is called \textbf{invariant} under $\Phi$ if for each $x\in S$ there exists an orbit $$\psi:\R\to S$$ on $\Phi$ with $\psi(0)=x$.
\end{definition}

We note that in the above definition, it is implicit that for $t<0$, $\psi(t)\in\Phi^*(-t,\psi(0))$, and for $t>0$, $\psi(t)\in\Phi(t,\psi(0))$.

For our purposes, the crucial idea here is that if the multiflow $\Phi$ is the solution set to a differential inclusions $\dot{x}\in F(x)$ then $S$ is invariant  if and only if there is some solution $\psi(\cdot)$ which remains in $S$ for all time.  

For those more acquainted with flows and differential equations, it might seem that the correct notion of invariance would be that $\Phi(\R^+,S):=\cup_{t\in\R^+}\Phi(t,S)=S$ and $\Phi^*(\R,S):=\cup_{t\in\R^+}\Phi^*(t,S)=S$.  Note that if $\Phi$ is single-valued (for instance in the case where the multiflow arises from a Lipschitz differential equation) then these definitions are equivalent, and so this definition is really an extension of the ordinary notion of invariance for flows.  However, in this setting, the condition that $\Phi(\R^+,S)=S$ turns out to be too strong.  One reason that it is too strong is that the $\omega$-limit set of a set $S$,
$$\omega(S):=\cap_{T\geq 0}\overline{\cup_{t\geq T}\Phi^t(S)}$$ is not invariant under this stricter definition.  For an example of this behavior, consider the multiflow $\Phi$ which arises from the differential inclusion $\dot{x}\in F(x)$, where $F:\R\to\R$ is defined by $$F(x)=\begin{cases}
-x, & x<0\\
[0,1], & x = 0\\
1, & x>0
\end{cases}$$ Then $\omega(x)=0$ for $x<0$, but $\Phi(t,0)=[0,t]$ for all $t>0$. 

We now continue with a few more definitions.  

\begin{definition}
For a set $N\in X$ and a given multiflow $\Phi$ on $X$, we denote the maximal invariant set contained in $N$ by $\text{Inv}(N,\Phi)$; that is,
$$\text{Inv}(N,\Phi):=\{x\in N| \exists\, \text{orbit}\, \psi:\R\to N, \psi(0)=x\}$$
\end{definition}

\begin{definition}
A compact set $N\subset X$ is called an \textbf{isolating neighborhood} if its maximal invariant set lies in its interior; that is,
$$\text{Inv}(N)\cap\partial N=\emptyset$$
\end{definition}

With these definitions in hand, along with all of the lemmas from the prior section, we are finally ready to state and prove the main result of this paper.  

\begin{theorem}\label{pert_thm}
Let $X\subset\R^n$ be compact and assume that $F:X\times[-1,1]\to\Pw(\R^n)$ satisfies the Filippov conditions.  For each $\lambda\in[-1,1]$, define the multiflow $$\Phi_\lambda:\R^+\times X\to \Pw(X)$$ by saying that $b\in\Phi(T,a)$ if and only if there is a solution $x:[0,T]\to X$ of the differential inclusion $\dot{x}\in F(x,\lambda)$ such that $x(0)=a$ and $x(T)=b$.

If $N$ is an isolating neighborhood for the multiflow $\Phi_0$ then there exists some $\varepsilon>0$ such that $|\lambda|<\varepsilon$ implies that $N$ is an isolating neighborhood for $\Phi_\lambda$.  
\end{theorem}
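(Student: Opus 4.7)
The plan is to argue by contradiction. Suppose the conclusion fails. Then there is a sequence $\lambda_k\to 0$ such that $N$ is not an isolating neighborhood for $\Phi_{\lambda_k}$, so we may select a point $x_k\in\partial N\cap\Inv(N,\Phi_{\lambda_k})$. By the definition of invariance for multiflows, for each $k$ there is a full orbit $\psi_k:\R\to N$ of the differential inclusion $\dot{x}\in F(x,\lambda_k)$ with $\psi_k(0)=x_k$.

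Next I would promote the corollary to Lemma \ref{pertappx} into a statement with vanishing approximation parameter. Applying that corollary with $\varepsilon=1/m$ produces some $\delta_m>0$ so that $|\lambda|<\delta_m$ forces every solution of $\dot{x}\in F(x,\lambda)$ to be a $(1/m)$-solution of $\dot{x}\in F(x,0)$. Since $\lambda_k\to 0$, a standard diagonal extraction then yields a subsequence (still denoted $\psi_k$) together with a sequence $\varepsilon_k\to 0$ such that each $\psi_k$ is an $\varepsilon_k$-solution of the unperturbed inclusion, with image contained in the compact set $N\subset X$.

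At this point Lemma \ref{extunisol} applies directly: from the family $\{\psi_k:\R\to N\}$ of $\varepsilon_k$-solutions with $\varepsilon_k\to 0$, we obtain an exact solution $\psi:\R\to N$ of $\dot{x}\in F(x,0)$, together with a further subsequence that converges uniformly to $\psi$ on each compact interval. In particular $\psi_k(0)\to\psi(0)$, and because $\partial N$ is closed and $\psi_k(0)\in\partial N$ for every $k$ we conclude $\psi(0)\in\partial N$. Then $\psi$ is an orbit of $\Phi_0$ lying entirely in $N$ with $\psi(0)\in\partial N$, so $\psi(0)\in\Inv(N,\Phi_0)\cap\partial N$, contradicting the assumption that $N$ is an isolating neighborhood for $\Phi_0$.

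The technical heart of the argument is the bridge from the perturbation parameter $\lambda_k$ to an approximation parameter $\varepsilon_k$ small enough to feed into Lemma \ref{extunisol}, while still retaining the property that each $\psi_k$ is a genuine $\varepsilon_k$-solution; this is exactly what the diagonal bookkeeping above arranges, using the corollary to Lemma \ref{pertappx}. Once that bridge is in place, the rest of the proof is a compactness packaging whose ingredients---the uniform equicontinuity of approximate solutions in Lemma \ref{lipsch} and the extension to all of $\R$ provided by Lemma \ref{extunisol}---were set up precisely so that this contradiction argument would carry through in the set-valued setting despite the lack of uniform upper-semicontinuity of $F$.
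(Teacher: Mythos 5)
Your proof is correct and follows essentially the same contradiction-plus-compactness argument as the paper: extract full orbits $\psi_k$ through $\partial N$ for $\lambda_k\to 0$, pass to a limit orbit of $\Phi_0$ via Lemma \ref{extunisol}, and contradict isolation. The only difference is that you spell out the bridge from perturbed solutions to $\varepsilon_k$-solutions of the unperturbed inclusion (via the corollary to Lemma \ref{pertappx} and a diagonal selection), a step the paper's proof uses implicitly when invoking Lemma \ref{extunisol}; making it explicit is a genuine improvement in rigor, not a different method.
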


\begin{proof}

We proceed by contradiction.  Suppose that for each $\varepsilon>0$ there was some $\lambda$ with $|\lambda|<\varepsilon$ such that $N$ was not an isolating neighborhood for $\Phi_\lambda$.  Then we can make sequences $$\{\varepsilon_k\}_{k=1}^\infty,\hspace{1cm}\{\lambda_k\}_{k=1}^\infty$$ such that $\varepsilon_k\to 0$ as $k\to\infty$, $|\lambda_k|<\varepsilon_k$, and $N$ is not an isolating neighborhood for $\Phi_{\lambda_k}$.   Since $N$ is not an isolating neighborhood for $\Phi_{\lambda_k}$, for each $k$ there is some orbit $$\psi_k:\R\to N$$ of $\Phi_{\lambda_k}$ such that $\psi_k(0)\in\partial N$.  

The sequence $\{\psi_k\}_{k=1}^\infty$ is uniformly bounded (since the range of each orbit is the compact space $X$) and it is equicontinuous by lemma \ref{lipsch}.  Therefore by lemma \ref{extunisol} there is an orbit $$\psi:\R\to N$$ of $\Phi_0$ such that on any compact interval $I$, there is a subsequence of $\{\psi_k|_I\}_{k=1}^\infty$ that converges uniformly to $\psi$.  Since $\partial N$ is compact and $\psi_k(0)\in\partial N$ for each $k$, $\psi(0)\in\partial N$.  This contradicts our assumption that $N$ is an isolating neighborhood for $\Phi_0$.  

%For each $x\in\partial N$, there exists an $\varepsilon_x$ such that $|\lambda|<\varepsilon_x$ implies that all orbits at $x$ of $\dot{x}\in F(x,\lambda)$ must still leave in some finite time.  If not, we could create a seqence $\varepsilon_k\to 0$ and corresponding solutions $\psi_k:\R\to N$ with $\psi(0)=x$.  These solutions are uniformly bounded and equicontinuous, and hence have a subsequence which limits to a solution remaining in $N$ for all time.  

%A very similar argument shows that, in fact, there is a single $\varepsilon>0$ such that $|\lambda|<\varepsilon$ implies that all orbits starting at $\partial N$ must still leave in finite time.  If that were not the case then we could create a sequence $\lambda_k\to 0$ with corresponding solutions $\psi_k:\R\to N$ to $\dot{x}\in F(x,\lambda)$ satisfying $\psi_k(0)\in\partial N$.  Since this sequence is uniformly bounded and equicontinuous, there exists a subsequence coverging to a solution $\psi^*:\R\to N$ of the differential inclusion $\dot{x}\in F(x,0)$.  Since $\partial N$ is compact, $\psi^*(0)\in\partial N$, contradicting our definition of $N$ as an isolating neighborhood.  
\end{proof}

The prior result gives some indication that Conley theory can be extended to differential inclusions.  This next result is more trivial, but is good to check in order to further that goal.  This result says that isolated invariant sets, another important object in Conley theory, are compact.  

\begin{definition}
A set $S\subset \R^n$ is called an \textbf{isolated invariant set} if it is the maximal invariant set in some isolating neighborhood.  That is, $S$ is an isolated invariant set if there is an isolating neighborhood $N$ such that $$S = \text{Inv}(N)$$
\end{definition}

\begin{lemma}
The closure of an invariant set is invariant.
\end{lemma}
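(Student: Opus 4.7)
The plan is to show that for each $y\in\overline{S}$ there exists an orbit $\psi:\R\to\overline{S}$ on $\Phi$ with $\psi(0)=y$, which is exactly the definition of invariance. Fix such a $y$ and, using $y\in\overline{S}$, choose a sequence $\{x_k\}\subset S$ with $x_k\to y$. Since $S$ is invariant, for each $k$ there is an orbit $\psi_k:\R\to S$ on $\Phi$ with $\psi_k(0)=x_k$; concretely, $\psi_k$ is an absolutely continuous solution of the underlying differential inclusion $\dot{x}\in F(x)$ whose entire image lies in $S\subset X$.

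Next, I would feed this sequence into Lemma \ref{extunisol}. Each $\psi_k$ is an exact solution, hence trivially a $\delta_k$-solution for any positive $\delta_k$; choose, for instance, $\delta_k=1/k\to 0$. The lemma then produces an exact solution $\psi:\R\to X$ of $\dot{x}\in F(x)$ with the property that on every compact interval $[a,b]\subset\R$ some subsequence of $\{\psi_k|_{[a,b]}\}$ converges uniformly to $\psi|_{[a,b]}$. This $\psi$ is the candidate orbit.

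It then remains to verify the two properties $\psi(0)=y$ and $\psi(\R)\subset\overline{S}$. For the first, take the interval $[-1,1]$ and let $\{\psi_{k_j}\}$ be the subsequence that converges uniformly to $\psi$ on $[-1,1]$; evaluating at $0$ gives $\psi(0)=\lim_j \psi_{k_j}(0)=\lim_j x_{k_j}=y$. For the second, fix any $t\in\R$, pick $[a,b]$ containing $t$, and let $\{\psi_{k_j}\}$ be the subsequence converging uniformly to $\psi$ on $[a,b]$; then $\psi(t)=\lim_j \psi_{k_j}(t)$, and each $\psi_{k_j}(t)\in S$, so $\psi(t)\in\overline{S}$. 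Thus $\psi$ is an orbit on $\Phi$ lying in $\overline{S}$ with $\psi(0)=y$, confirming that $\overline{S}$ is invariant.

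I do not expect a serious obstacle here; this is essentially a compactness argument tailored to the multiflow setting, and everything is already packaged in Lemma \ref{extunisol}. The only point that deserves a moment of care is the pointwise containment $\psi(t)\in\overline{S}$: the limit $\psi$ coming out of Lemma \ref{extunisol} is defined by a diagonal/nested-subsequence construction across expanding compact intervals, but on any given interval the uniform limit is in particular a pointwise limit, and a pointwise limit of points in $S$ lies in $\overline{S}$, which is all that is needed.
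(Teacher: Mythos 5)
Your proposal is correct and follows essentially the same route as the paper: pass from a sequence $x_k\to y$ in $S$ to the associated orbits, invoke Lemma \ref{extunisol} (exact solutions being $\delta_k$-solutions for any $\delta_k\to 0$), and identify the limit orbit. Your extra care in checking $\psi(0)=y$ and the pointwise containment $\psi(t)\in\overline{S}$ on each compact interval is a welcome filling-in of details the paper's proof only asserts.
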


\begin{proof}\label{inv_cls}
Assume the set $K$ is invariant.  Let $x$ be a limit point of $K$, so that there is some sequence $\{x_k\}_{k=1}^\infty\subset K$ converging to $x$.  Since $K$ is invariant, we have an associated sequence of orbits $\{\psi_k:\R\to K\}_{k=1}^\infty$ with $\psi_k(0)=x_k$.  By lemma \ref{extunisol} there is an orbit $\psi:\R\to \overline{K}$ where $\{\psi_k:\R\to K\}_{k=1}^\infty$ converges uniformly to $\psi$ on any compact interval.  Clearly $\psi(0)=x$.  Since the choice of $x\in\overline{K}$ was arbitrary, we see that the closure of $K$ is invariant.  
\end{proof}

\begin{corollary}
Isolated invariant sets are compact
\end{corollary}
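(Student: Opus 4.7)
The plan is to show that an isolated invariant set $S$ is both closed and bounded in $\R^n$, from which compactness follows by the Heine-Borel theorem. Boundedness is essentially immediate: by definition there is an isolating neighborhood $N$ with $S=\text{Inv}(N)$, and $N$ is compact by assumption, so $S\subset N$ is bounded.

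The main work is in verifying closedness, and for this I would lean directly on the preceding lemma. The key observation is that $\overline{S}$ is an invariant set (by the prior lemma) that remains inside $N$. To see the containment, note that $N$ is compact and hence closed in $\R^n$, so $S\subset N$ gives $\overline{S}\subset\overline{N}=N$. Now the maximality of $S$ in the definition of $\text{Inv}(N)$ finishes the argument: since $\overline{S}$ is invariant and sits inside $N$, we must have $\overline{S}\subset\text{Inv}(N)=S$. Combined with the trivial inclusion $S\subset\overline{S}$, this forces $S=\overline{S}$, so $S$ is closed.

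The step I expect to require the most care is the justification that $\overline{S}$ remains invariant with respect to the multiflow $\Phi$ on $X$ (as opposed to invariance in some ambient sense). The preceding lemma is stated for invariant subsets of the multiflow on the compact space $X$, and here $S\subset N\subset X$, so taking closure in $\R^n$ or in $X$ produces the same set (since $X$ is closed). In the proof of that lemma, the Arzela–Ascoli style extraction from Lemma \ref{extunisol} was used to produce a limit orbit through each boundary point in $\overline{S}$, landing in $\overline{S}$ itself. Here the same reasoning applies and the resulting orbit lies in $\overline{S}\subset N$, which is exactly what is needed to conclude $\overline{S}\subset\text{Inv}(N)=S$. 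Once this is verified, the corollary follows in one line.
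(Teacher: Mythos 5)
Your proof is correct and follows essentially the same route as the paper: boundedness from $S\subset N$ compact, and closedness by applying the preceding lemma to get $\overline{S}$ invariant inside $N$, then invoking maximality of $\text{Inv}(N)$. If anything, your justification that $\overline{S}\subset N$ via closedness of $N$ is slightly cleaner than the paper's claim that $\overline{S}$ lies in the interior of $N$, which is not immediate from $S\subset\text{int}(N)$ alone.
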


\begin{proof}
Let $S$ be an isolated invariant set.  Since $S$ is contained in a compact neighborhood $N$ by assumption, the boundedness condition is immediate.  Since $S$ is contained in the interior of this neighborhood, its closure, $\overline{S}$, is also contained in the interior.  Since $\overline{S}$ is invariant by lemma \ref{inv_cls}, and $S=\Inv(N)$, we see that $S=\overline{S}$.  
\end{proof}

\section{Conclusions and Acknowledgements} 
\subsection{Conclusions and Future Work}

Showing that isolating neighborhoods are stable is the first step in a very long journey.  The natural next step is to decompose invariant sets into attractors, repellers, and a gradient-like region between them, and also to show that this decomposition continues under the sense of perturbation that we have defined here.  We have been able to complete these steps in \cite{thieme3}, available as a preprint on the arXiv.  But a lot of work still remains in generalizing Conley index theory to this setting.  For one, we still require a robust definition of the index itself.  Beyond that, we would also like to generalize the complete Morse decomposition to this setting, including showing that these systems possess a Lyapunov function.  However, other researchers have had success along these lines for slightly more limited classes of differential inclusions.  The index is generalized in \cite{mrozek}, \cite{dzedzej}, and \cite{casagrande}, and both \cite{kopanskii} and \cite{li} address the Morse decomposition and the Lyapunov function.  Though some technical challenges remain, these results make it seem highly plausible that Conley Index theory can be generalized to the setting we have discussed in this paper, allowing us to use it to analyze, among other things, any piecewise continuous vector field.

\subsection{Acknowledgements} 
We would like to thank all of the participants of the University of Minnesota's Mathematics of Climate Seminar for listening to an enormous number of presentations about these ideas.  Their comments and questions were crucial in the development of many of these ideas.  In particular, Richard McGehee--who runs the Climate Seminar--deserves a special acknowledgement; without his guidance this project would not have been possible.

Department of Mathematics

University of Minnesota

206 Church Street Se

Minneapolis, Minnesota, US

Email: {\it thiem019@umn.edu}


\begin{thebibliography}{9}

\bibitem{bernardo} 
{\sc M. Di Bernardo, C.J. Budd, A.R. Champneys, and P. Kowalczyk},
\textit{Piecewise-smooth
Dynamical Systems Theory and Applications}. 
Springer, 2008.

%\bibitem{ball}
%J.M. Ball.
%\textit{Continuity properties and global attractors of generalized semiflows and the Navier–
%Stokes equations}. J. Nonlinear Sci. 7(5): 475–502, 1997.
%
%\bibitem{carabllo} 
%T. Caraballo, P. Marin-Rubio \& J.C. Robinson.
%\textit{A Comparison between Two Theories for
%Multi-Valued Semiflows and Their Asymptotic
%Behaviour}.
%Set-Valued Analysis 11: 297–322, 2003.

\bibitem{kopanskii}
{\sc I.U. Bronstein, A.Ya. Kopanskii},
\textit{Chain recurrence in dynamical systems without uniqueness}.
Nonlinear Anal. \textbf{12} (1988), 147–154.

\bibitem{casagrande}
{\sc R. Casagrande, K. A. de Rezende, and M. A. Teixeira},
\textit{Conley Index for Discontinuous Vector Fields}.
Geom Dedicata {\bf 136} (2008), 47-56.

\bibitem{conley}
{\sc C. Conley},
\textit{Isolated Invariant Sets and the Morse Index}.
CBMS Regional Conference Series in Mathematics, vol. 38, 1978.

\bibitem{dzedzej}
{\sc Z. Dzedzej and G. Gabor},
\textit{On Homotopy Conley Index for Multivalued Flows in Hilbert Spaces}.
Top. Methods Nonlinear Anal. {\bf 38} (2011), 187-205.

\bibitem{filippov} 
{\sc A.F. Filippov},
\textit{Differential Equations with Discontinuous Righthand Sides}. 
Kluwer Acad. Pub., 1988.
 
%\bibitem{guckenheimer}
%J. Guckenheimer. 
%\textit{Piecewise-smooth
%Dynamical Systems Theory and Applications}.
%SIAM Review, 50:606-609, 2008.
% 
%\bibitem{hill}
%K. Hill.
%\textit{Bifurcation Analysis of a Piecewise-Smooth Arctic Energy Balance Model}.
%Thesis, Northwestern University, 2017.

\bibitem{jeffrey}
{\sc M.R. Jeffrey},
\textit{Hidden Dynamics in Models of Discontinuity and Switching}.
Physica D {\bf 273} (2014), 34-45.
 
 \bibitem{li}
 {\sc D. Li},
 \textit{Morse Decompositions for General Dynamical Systems and Differential Inclusions with Applications to Control Systems}.
SIAM J. Control Optim. {\bf 46} (2007), 35-60.

\bibitem{mcspeech}
{\sc R. McGehee},
\textit{Non-unique Dynamical Systems}. Midwest Dynamical Systems Conference.  Minneapolis, Minnesota.  3 November, 2018.  

%\bibitem{melnik}
%V. Melnik \& J. Valero.  
%\textit{On Attractors of Multivalued Semi-Flows and
%Differential Inclusions}.
%Set-Valued Analysis 6: 83–111, 1998.

\bibitem{mischmroz}
{\sc K. Mischaikow and M. Mrozek},
\textit{Conley Index Theory}. 
Handbook of Dynamical Systems II: Towards Applications, (B. Fiedler, ed.) North-Holland, 2002.

\bibitem{mischaikow}
{\sc K. Mischaikow},
\textit{The Conley Index Theory: A Brief Introduction}.
Banach Center Publ. {\bf 47} (1999), 9-19.

\bibitem{mrozek}
{\sc M. Mrozek},
\textit{A Cohomological Index of Conley Type for Multi-Valued Admissible Flows}.
J. Differential Equations {\bf 84} (1990), 15-51.  

\bibitem{oyama}
{\sc D. Oyama},  
\textit{Lecture Notes on Set-Valued Dynamical Systems}.  Lecture Notes, 2014.

%\bibitem{roxin}
%E. Roxin.
%\textit{On Generalized Dynamical Systems Defined by
%Contingent Equations}.
%Journal of Differential Equations, 1: 188-205, 1965.

\bibitem{thieme}
{\sc C. Thieme},
\textit{Multiflows: A New Technique for Filippov Systems and Differential Inclusions}.
ArXiv (2019).

\bibitem{thieme3}
{\sc C. Thieme},
{\it Conley Index Theory and the Attractor-Repeller Decomposition for Differential Inclusions}.
Arxiv (2020)


\bibitem{welander}
{\sc P. Welander},
\textit{A Simple Heat-Salt Oscillator}.
Dynamics of Atmospheres and Oceans, {\bf 6} (1982), 233-242.

\end{thebibliography}
\end{document}